\numberwithin{equation}{section}
\theoremstyle{plain}
\newtheorem{lemma}{Lemma}[section]
\newaliascnt{proposition}{lemma}
\newtheorem{proposition}[proposition]{Proposition}
\newaliascnt{theorem}{lemma}
\newtheorem{theorem}[theorem]{Theorem} 
\newtheorem*{theorem*}{Theorem}
\newaliascnt{corollary}{lemma}
\newtheorem{corollary}[corollary]{Corollary} 
\newaliascnt{definition}{lemma}
\theoremstyle{remark}
\newaliascnt{remark}{lemma}
\newtheorem{remark}[remark]{Remark} 
\definecolor{untgreen}{RGB}{5,144,51}
\definecolor{cublue}{RGB}{75,146,219}
\definecolor{cugold}{RGB}{207,184,124}
\definecolor{lightblue}{rgb}{0.8,0.85,1}
\definecolor{grayshade}{gray}{0.85}
\definecolor{shade}{cmyk}{.29,0,1,0} 
\def\equationautorefname~#1\null{(#1)\null}
\def\itemautorefname~#1\null{(#1)\null}
\def\sectionautorefname~#1\null{\S#1\null}
\def\subsectionautorefname~#1\null{\S#1\null}
\renewcommand\subsection{\@startsection{subsection}{2}%
  \z@{1\linespacing\@plus.2\linespacing}{.5\linespacing}%
  {\normalfont\bfseries}} 
\newcommand\CA{{\mathcal A}} 
\newcommand\CB{{\mathcal B}}
\newcommand\CI{{\mathcal I}} 
\newcommand\CL{{\mathcal L}}
\newcommand\CZ{{\mathcal Z}}
\newcommand\fp{{\mathfrak p}}
\newcommand\fq{{\mathfrak q}}
\newcommand\fm{{\mathfrak m}}
\newcommand\fr{{\mathfrak r}}
\newcommand\FH{\mathfrak H}
\newcommand\BBC{{\mathbb C}}
\newcommand\Fix{{\operatorname{Fix}}}
\newcommand\GL{\operatorname{GL}}
\newcommand\lcm{\operatorname{lcm}}
\newcommand\inverse{^{-1}}
\renewcommand\th{{^{\text{th}}}}
\newcommand\id{{id}}
\renewcommand\r{{\operatorname{ref}}}
\newcommand\ram{{\operatorname{ram}}}
\newcommand\rhoXtilde{\widetilde\rho_X}
\begin{document}

\title[Restricting invariants of unitary reflection groups]
	{Restricting invariants of unitary reflection groups}

\author[N. Amend]{Nils Amend}
\address
{Fakult\"at f\"ur Mathematik,
Ruhr-Universit\"at Bochum,
D-44780 Bochum, Germany}
\email{nils.amend@rub.de}
\email{gerhard.roehrle@rub.de}

\author[A. Berardinelli]{Angela Berardinelli} \address{Department of
  Mathematics and Information Technology\\ Mercyhurst University\\ Erie PA,
  USA 16546} 
\email{aberardinelli@mercyhurst.edu} 

\author[J.M. Douglass]{J. Matthew Douglass} \address{Department of
  Mathematics\\ University of North Texas\\ Denton TX, USA 76203}
\email{douglass@unt.edu}

\author[G. R\"ohrle]{Gerhard R\"ohrle}
\dedicatory{To the memory of Robert Steinberg}

\keywords{Reflection arrangements, unitary reflection groups, invariants,
smooth orbit variety}
\subjclass[2010]{Primary 20F55; Secondary 13A50}

\begin{abstract}
  Suppose that $G$ is a finite, unitary reflection group acting on a complex
  vector space $V$ and $X$ is the fixed point subspace of an element of $G$.
  Define $N$ to be the setwise stabilizer of $X$ in $G$, $Z$ to be the
  pointwise stabilizer, and $C=N/Z$. Then restriction defines a homomorphism
  from the algebra of $G$-invariant polynomial functions on $V$ to the
  algebra of $C$-invariant functions on $X$. Extending earlier work by
  Douglass and R\"ohrle for Coxeter groups, we characterize when the
  restriction mapping is surjective for arbitrary unitary reflection groups
  $G$ in terms of the exponents of $G$ and $C$, and their reflection
  arrangements. A consequence of our main result is that the variety of
  $G$-orbits in the $G$-saturation of $X$ is smooth if and only if it is
  normal.
\end{abstract}

\maketitle

\allowdisplaybreaks

\section{Introduction}\label{sec:intro}

Let $G$ be a finite unitary reflection group acting on a finite dimensional
complex vector space $V$ and let $X$ be a subspace of $V$. Define
\[
N_X=\{\, g\in G\mid g(X)=X\,\},
\]
the setwise stabilizer of $X$ in $G$,
\[
Z_X=\{\, g\in G\mid g(x)=x\, \forall x\in X\,\},
\]
the pointwise stabilizer of $X$ in $G$, and
\[
C_X=N_X/Z_X.
\]
Then $C_X$ acts faithfully on $X$. We frequently identify $C_X$ with its
image in $\GL(X)$ and consider $C_X$ as a subgroup of $\GL(X)$. In this
situation, restriction defines a surjective, degree preserving, algebra
homomorphism $\rhoXtilde \colon \BBC[V] \to \BBC[X]$ from the algebra of
polynomial functions on $V$ to the algebra of polynomial functions on $X$
with $\rhoXtilde(\BBC[V]^G) \subseteq \BBC[X]^{C_X}$. Let
\[
\rho_X \colon \BBC[V]^G \to \BBC[X]^{C_X}
\]
be the algebra homomorphism from $G$-invariant polynomials on $V$ to
$C_X$-invariant polynomials on $X$ obtained by restriction. Note that
$\rho_X$ is surjective if and only if every $C_X$-invariant polynomial
function on $X$ extends to a $G$-invariant polynomial function on $V$. The
main result of this paper, \autoref{thm:main}, is an elementary
combinatorial characterization of when the map $\rho_X$ is surjective in the
case when $X$ is the fixed point space of an element (or equivalently a
subgroup) of $G$. This characterization is in terms of (1) the exponents of
$G$ and the exponents of the subgroup of $C_X$ generated by the elements
that act on $X$ as reflections, and (2) the restriction of the arrangement
of $G$ to $X$. In addition, we show that when $\rho_X$ is surjective, the
group $C_X$ always acts on $X$ as a unitary reflection group. In the course
of the proof of \autoref{thm:main} we classify all pairs $(G,X)$ where $G$
is an irreducible unitary reflection group acting faithfully on $V$, $X$ is
the fixed point set of an element of $G$, and $C_X$ acts on $X$ as a unitary
reflection group. A corollary of the main result is a characterization of
the surjectivity of $\rho_X$ in terms of the smoothness of the variety of
$G$-orbits in the $G$-saturation of $X$ in $V$.

\autoref{thm:main} in the present paper was stated and proved for finite
Coxeter groups in~\cite{douglassroehrle:invariants}. The arguments in this
paper are almost entirely independent of that paper and give a more uniform
proof of \cite[Theorem 2.3] {douglassroehrle:invariants}.

Lehrer and Springer \cite{lehrerspringer:intersection} show that if $G$ acts
on $V$ as a reflection group and $X$ is a subspace of $V$ that is maximal
among the eigenspaces of elements of $G$ with a fixed eigenvalue, then the
conclusions of \autoref{thm:main} hold. Because we consider fixed point
subspaces, that is, $1$-eigenspaces, of elements of $G$, the only subspace
covered by both our arguments and those in
\cite{lehrerspringer:intersection} is $V$ itself.

The rest of this paper is organized as follows. In \autoref{sec:state} we
set notation and state the main theorems. We make some reductions and prove
some preliminary results in \autoref{sec:prelim}, then we complete the proof
of \autoref{thm:main} for the infinite family $G(r,p,n)$ in
\autoref{sec:grpn} and for the exceptional unitary reflection groups in
\autoref{sec:exc}. Tables containing the results of our computations for the
exceptional groups that are used in the proof of the theorem are given in an
appendix.

\section{Statement of the main results} \label{sec:state}

Suppose $V$ is an $n$-dimensional complex vector space and $G$ is a finite
subgroup of the general linear group $\GL(V)$. A linear transformation $r\in
\GL(V)$ is a reflection if its fixed point subspace, $\Fix(r)$, is a
hyperplane in $V$. Let $G^\r$ be the subgroup of $G$ generated by the set of
reflections in $G$.  If $G= G^\r$, then we say that $G$ acts on $V$ as a
unitary reflection group, or simply as a reflection group. The group
$\GL(V)$ acts on the algebra $\BBC[V]$ of polynomial functions on $V$ with
\[
(g\cdot f)(v)= f(g\inverse v)\quad\text{for $g\in \GL(V)$,
$f\in \BBC[V]$, and $v\in V$,}
\]
and one may consider the subalgebra $\BBC[V]^G$ of $G$-invariant polynomial
functions on $V$. Obviously a polynomial function $f$ is $G$-invariant if
and only if it is constant on $G$-orbits in $V$. It follows from the
well-known theorem of Chevalley-Shephard-Todd that the following statements
are equivalent:
\begin{itemize}
\item $G$ acts on $V$ as a unitary reflection group.
\item $\BBC[V]^G$ is a polynomial algebra.
\item The orbit variety $V/G$ is smooth.
\end{itemize}
When these conditions hold, it is known that the multiset of degrees of a
set of homogeneous generators of $\BBC[V]^G$ does not depend on the chosen
set of generators. These positive integers are the degrees of $G$. If $d_1,
\dots, d_n$ are the degrees of $G$, then $d_1-1, \dots, d_n-1$ are the
exponents of $G$. In general, define the exponents of a finite subgroup $G$
of $\GL(V)$ to be the multiset of exponents of the unitary reflection group
$G^\r$ and denote this multiset by $\exp(G)$.

By a hyperplane arrangement in $V$ we mean a finite set of hyperplanes in
$V$. The reflections in $G$ determine a hyperplane arrangement in $V$,
namely
\[
\CA(V,G)= \{\, \Fix(r)\mid \text{$r$ is a reflection in $G$} \,\}.
\]
The arrangement $\CA(V,G)$ is called a reflection arrangement. Note that
$G^\r$ acts on $V$ as a reflection group, $\CA(V, G) = \CA(V, G^\r)$ is a
reflection arrangement, and $\exp(G)= \exp(G^\r)$.

For a subspace $X$ of $V$ there are two natural hyperplane arrangements
arising from the action of $G$ on $V$. These are
\begin{enumerate}
\item the reflection arrangement $\CA(X, C_X) =\CA(X, C_X^\r)$ and
\item the restricted arrangement $\CA(V,G)^X$, which consists of the
  intersections $H\cap X$ for $H$ in $\CA(V,G)$ with $X\not\subseteq H$.
\end{enumerate}
By definition $\CA(X, C_X)$ is a reflection arrangement, but in general
$\CA(V,G)^X$ is not necessarily a reflection arrangement.

We can now state our main result.

\begin{theorem}\label{thm:main}
  Suppose $G$ acts on $V$ as a unitary reflection group and $X$ is the space
  of fixed points of an element of $G$.  Then the restriction map
  \[
  \rho_X\colon \BBC[V]^G \to \BBC[X]^{C_X}\text{\ is surjective}
  \]
  if and only if
  \[
  \CA(X,C_X) =\CA(V,G)^X\quad \text{and} \quad \exp(C_X) \subseteq \exp(G).
  \]
  Furthermore, when these conditions hold, $C_X=C_X^\r$ acts on $X$ as a
  reflection group.
\end{theorem}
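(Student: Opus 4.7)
My plan is to prove the equivalence by splitting into the two implications and handling the ``furthermore'' clause along the way, after first reducing to the case in which $G$ is irreducible and acts essentially on $V$. The reduction is standard: fixed-point spaces respect direct sum decompositions of $(G, V)$ into irreducible components, the groups $N_X$, $Z_X$, and $C_X$ factor accordingly, the invariant algebras decompose as tensor products, and both the hypothesis and each condition in the conclusion are stable under these decompositions. In the irreducible case I would combine a uniform conceptual argument with the explicit case-by-case verification signaled by \autoref{sec:grpn} and \autoref{sec:exc}.

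For the forward direction, assume $\rho_X$ is surjective. Then $\BBC[X]^{C_X}$ is a graded quotient of the polynomial ring $\BBC[V]^G$ of Krull dimension $\dim X$, and a Hilbert-series argument comparing the image with the Cohen--Macaulay algebra $\BBC[X]^{C_X}$ forces the latter to be a polynomial algebra as well. Chevalley--Shephard--Todd then gives $C_X = C_X^{\r}$, and matching degrees of lifted basic invariants of $C_X$ with basic invariants of $G$ yields $\exp(C_X) \subseteq \exp(G)$. The inclusion $\CA(X, C_X) \subseteq \CA(V, G)^X$ is general, since any reflection in $C_X$ lifts to an element of $N_X$ whose fixed space in $V$ cuts $X$ in the corresponding hyperplane. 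The reverse inclusion I would extract from a Jacobian identity: the Jacobian of a system of basic $G$-invariants is proportional to a product of defining forms of $\CA(V, G)$ with prescribed multiplicities, and its restriction to $X$ must agree with the Jacobian product for $\CA(X, C_X^{\r})$ once the two invariant rings are identified via $\rho_X$.

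For the reverse implication together with the ``furthermore'' clause, I would assume both conditions and produce surjectivity directly. Choose $k = \dim X$ basic $G$-invariants $f_{i_1}, \ldots, f_{i_k}$ whose degrees match the degrees of $C_X^{\r}$ via $\exp(C_X) \subseteq \exp(G)$. The arrangement equality controls the zero locus of the Jacobian of the restrictions $\rho_X(f_{i_j})$, writing it as a product of the defining forms of $\CA(X, C_X^{\r}) = \CA(V, G)^X$ with the correct multiplicities; the resulting nonvanishing forces algebraic independence of the $\rho_X(f_{i_j})$, and the product of the selected degrees equalling $|C_X^{\r}|$ then forces $\BBC[\rho_X(f_{i_1}), \ldots, \rho_X(f_{i_k})] = \BBC[X]^{C_X^{\r}}$, collapsing the chain $\im \rho_X \subseteq \BBC[X]^{C_X} \subseteq \BBC[X]^{C_X^{\r}}$ to a single algebra and simultaneously giving surjectivity and $C_X = C_X^{\r}$. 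Carrying out the Jacobian identity will rely on the case-by-case analysis signaled by the paper: for $G(r, p, n)$ using the explicit combinatorial description of elements as monomial matrices, of fixed spaces by root-of-unity data, and of basic invariants; and for the exceptional unitary reflection groups by direct enumeration of $G$-conjugacy classes of fixed spaces and machine computation. I expect the principal obstacle to be the $G(r, p, n)$ case: the quotient $C_X = N_X/Z_X$ is not always of the form $G(r', p', k)$, and determining exactly when the restricted arrangement matches $\CA(X, C_X)$ requires a delicate divisibility analysis involving $p$ and the combinatorics of the fixed space, which is where the theorem's true combinatorial content lives.
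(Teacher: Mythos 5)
The reverse implication in your proposal, together with the way you extract $C_X=C_X^\r$ from the collapse of $\im\rho_X\subseteq\BBC[X]^{C_X}\subseteq\BBC[X]^{C_X^\r}$, does match the paper's strategy: select basic invariants of $G$ whose degrees match those of $C_X^\r$, verify nonvanishing of the Jacobian of their restrictions case by case (explicit power sums for $G(r,p,n)$, machine computation for the exceptional groups), and apply Springer's criterion. The genuine gap is in your forward direction, at the step where you claim that surjectivity of $\rho_X$ forces $\BBC[X]^{C_X}$ to be a polynomial algebra by ``a Hilbert-series argument comparing the image with the Cohen--Macaulay algebra $\BBC[X]^{C_X}$.'' Under your hypothesis the image \emph{is} $\BBC[X]^{C_X}$, so there is nothing to compare, and no soft argument of this kind can work: a graded surjection from a polynomial ring onto a Cohen--Macaulay graded algebra of smaller Krull dimension does not force the target to be polynomial (e.g.\ $\BBC[x,y]\to\BBC[t^2,t^3]$). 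Everything downstream in your forward direction --- matching degrees of lifted basic invariants to obtain $\exp(C_X)\subseteq\exp(G)$, and identifying the two invariant rings so as to compare Jacobians --- presupposes exactly this polynomiality. In the paper the statement $C_X=C_X^\r$ is \autoref{thm:ref}, and it is not a formal consequence of surjectivity: one first proves $\CA(C_X)=\CA(G)^X$ (\autoref{thm:surj}) by a ramification argument (the Denef--Loeser identity forcing the partial derivatives of invariants in the non-fixed directions to vanish on $X$, Steinberg's computation of the rank of the Jacobian, and Benson's identification of ramified height-one primes with reflecting hyperplanes of $C_X$), and then deduces $C_X=C_X^\r$ from the classification of all pairs $(G,X)$ with $\CA(C_X)=\CA(G)^X$. \autoref{rem:table} shows that these implications are not formal, so some case analysis at this stage appears unavoidable.

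A secondary flaw: your route to $\CA(G)^X\subseteq\CA(C_X)$ by restricting the factorization of the Jacobian of the basic $G$-invariants to $X$ cannot be taken literally, because for any $X\ne V$ in $L(\CA(G))$ the full $n\times n$ Jacobian determinant vanishes identically on $X$ (its rank there is at most $\dim X<n$ by Steinberg's theorem), so its restriction carries no information. One must instead work with the $n\times a$ submatrix obtained after discarding the columns that vanish on $X$ --- this is where the Denef--Loeser lemma enters --- and even then the comparison with a Jacobian for $C_X^\r$ would require already knowing which invariants restrict to generators; this is precisely why the paper replaces the naive Jacobian comparison with the ramification-theoretic argument of \autoref{thm:surj}.
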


Examples show that the theorem is sharp in the sense that there are spaces
of fixed points such that $\CA(X,C_X) =\CA(V,G)^X$ and $\exp(C_X)\not
\subseteq \exp(G)$; and there are spaces of fixed points such that
$\CA(X,C_X) \ne\CA(V,G)^X$ and $\exp(C_X) \subseteq \exp(G)$.

The map $\rho_X$ is the comorphism of the finite morphism from the quotient
variety $X/C_X$ to the quotient $V/G$ that maps a $C_X$-orbit in $X$ to its
$G$-orbit in $V$. The image of this morphism is the variety $GX/G$, the
orbit variety of the $G$-saturation of $X$ in $V$, and it factors as the
composition
\[
X/C_X \to GX/G \to V/G,
\]
where the first morphism is surjective and the second is injective.  The
surjective morphism $X/C_X \to GX/G$ is the normalization of the affine
variety $GX/G$. Richardson~\cite[2.2.1]{richardson:normality} has shown that
if $G$ is a unitary reflection group and $X$ is any subspace of $V$, then
$\rho_X$ is surjective if and only if $GX/G$ is a normal variety, or
equivalently, if and only if $X/C_X \cong GX/G$. It follows from the
Chevalley-Shephard-Todd Theorem that $C_X$ acts on $X$ as a reflection group
if and only if $X/C_X$ is a smooth variety. Together with
\autoref{thm:main}, this proves the following corollary.

\begin{corollary}\label{cor:main1}
  Suppose $G$ acts on $V$ as a unitary reflection group and $X$ is the space
  of fixed points of an element of $G$.  Then the following are equivalent:
  \begin{enumerate}
  \item $\rho_X\colon \BBC[V]^G \to \BBC[X]^{C_X}$ is surjective.
  \item $\CA(X,C_X)= \CA(V,G)^X $ and $\exp(C_X) \subseteq \exp(G)$.
  \item $X/C_X \cong GX/G$.
  \item $GX/G$ is a normal variety.
  \item $GX/G$ is a smooth variety.
  \end{enumerate}
\end{corollary}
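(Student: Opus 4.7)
The plan is to obtain the five-way equivalence by assembling \autoref{thm:main} with Richardson's normality criterion and the Chevalley--Shephard--Todd theorem, all of which are already quoted in the preceding text. Since \autoref{thm:main} gives (1) $\Leftrightarrow$ (2) directly, the real work is to link these to conditions (3), (4), and (5) concerning the geometry of $GX/G$.

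First I would appeal to Richardson's result \cite[2.2.1]{richardson:normality} recalled in the paragraph before the corollary: for a unitary reflection group $G$ and an arbitrary subspace $X\subseteq V$, the map $\rho_X$ is surjective if and only if $GX/G$ is normal, which is in turn equivalent to the normalization map $X/C_X\to GX/G$ being an isomorphism. This yields (1) $\Leftrightarrow$ (3) $\Leftrightarrow$ (4) at one stroke, without any use of the hypothesis that $X$ is a fixed point space.

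Next, smoothness implies normality for varieties over $\BBC$, so (5) $\Rightarrow$ (4) is immediate. To close the circle, I would prove (1) $\Rightarrow$ (5). Here is where the ``furthermore'' clause of \autoref{thm:main} enters: condition (1) forces $C_X=C_X^{\r}$ to act on $X$ as a unitary reflection group. By Chevalley--Shephard--Todd (in the form stated in \autoref{sec:state}), the orbit variety $X/C_X$ is then smooth, and combining this with the isomorphism $GX/G\cong X/C_X$ furnished by (3) gives (5).

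This is essentially a bookkeeping argument once \autoref{thm:main} is available, so there is no serious obstacle. The one point that deserves attention is that the proof of (1) $\Rightarrow$ (5) genuinely relies on the ``furthermore'' part of \autoref{thm:main} and not merely on its main biconditional: without the assertion that $C_X$ acts as a reflection group, one could not conclude smoothness of $X/C_X$ (and hence of $GX/G$) from the mere surjectivity of $\rho_X$.
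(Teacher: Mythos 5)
Your proposal is correct and follows essentially the same route as the paper: Richardson's criterion gives the equivalence of (1), (3), and (4); smoothness implies normality gives (5) $\Rightarrow$ (4); and the ``furthermore'' clause of \autoref{thm:main} together with the Chevalley--Shephard--Todd theorem (smoothness of $X/C_X$) closes the loop for (1) $\Rightarrow$ (5). You have also correctly identified the one non-routine point, namely that the implication toward smoothness genuinely requires knowing that $C_X$ acts on $X$ as a reflection group.
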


The lattice of a hyperplane arrangement $\CA$, denoted by $L(\CA)$, is the
set of subspaces of the form $H_1\cap \dotsm \cap H_m$, where $\{ H_1,
\dots, H_m \}$ is a subset of $\CA$. The next lemma is well-known.

\begin{lemma}
  Suppose that $G$ acts on $V$ as a unitary reflection group and that $X$ is
  a subspace of $V$. Then the following statements are equivalent:
  \begin{enumerate}
  \item $X$ is in the lattice of $\CA(V,G)$.
  \item $X$ is the fixed point space of an element of $G$.
  \item $X$ is the fixed point space of a subgroup of $G$.
  \end{enumerate}
\end{lemma}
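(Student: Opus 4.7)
The plan is to verify the cycle (2)$\Rightarrow$(3), (1)$\Rightarrow$(3), (3)$\Rightarrow$(1), (3)$\Rightarrow$(2). The implication (2)$\Rightarrow$(3) is immediate: take $H=\langle g\rangle$. For (1)$\Rightarrow$(3), I would write $X=\Fix(r_1)\cap\cdots\cap\Fix(r_m)$ as an intersection of reflecting hyperplanes from $\CA(V,G)$ and set $H=\langle r_1,\dots,r_m\rangle$; each generator fixes $X$ pointwise, so $X\subseteq \Fix(H)$, and conversely $\Fix(H)\subseteq\Fix(r_i)$ for every $i$ forces $\Fix(H)\subseteq X$.

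For the two remaining implications the key tool is Steinberg's classical theorem that the pointwise stabilizer $Z_X$ of any subset $X\subseteq V$ in a finite unitary reflection group is itself a reflection group, generated precisely by those reflections in $G$ whose fixed hyperplane contains $X$. For (3)$\Rightarrow$(1), given $X=\Fix(H)$, the inclusion $H\leq Z_X$ yields $\Fix(Z_X)\subseteq\Fix(H)=X$, while the reverse inclusion is automatic, so $X=\Fix(Z_X)$. Writing $Z_X=\langle r_1,\dots,r_m\rangle$ as in Steinberg's theorem then exhibits $X=\bigcap_{i=1}^{m}\Fix(r_i)$, which lies in $L(\CA(V,G))$.

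The subtle step is (3)$\Rightarrow$(2), where a subgroup must be promoted to a single element with the same fixed space. Starting from $X=\Fix(Z_X)$ with $Z_X$ a reflection group (as above), I would fix a $G$-invariant Hermitian inner product on $V$ and decompose $V=X\oplus Y$ orthogonally. Then $Z_X$ preserves $Y$, acts faithfully on $Y$ as a unitary reflection group, and has global fixed subspace $\{0\}$ on $Y$, since $\Fix_V(Z_X)\cap Y=X\cap Y=\{0\}$. The task thus reduces to producing an element of a finite unitary reflection group whose only fixed point in the ambient space is $0$; this is supplied by Springer's theory of regular elements (for example, any regular element whose order equals the largest degree of $Z_X|_Y$), and pulling such an element back to $Z_X\subseteq G$ yields $g\in G$ with $\Fix_V(g)=X$.
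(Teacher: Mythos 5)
Your proof is correct, but it takes a genuinely different route from the paper's. The paper disposes of the lemma in three lines: it cites \cite[Theorem 6.27]{orlikterao:arrangements} for the equivalence of (1) and (2), gets (2)$\Rightarrow$(3) from $\Fix(g)=\Fix(\langle g\rangle)$, and gets (3)$\Rightarrow$(1) by writing $\Fix(H)=\bigcap_{h\in H}\Fix(h)$ and using that $L(\CA(V,G))$ is closed under intersection together with the already-established (2)$\Rightarrow$(1). You instead essentially reprove the Orlik--Terao theorem from scratch: Steinberg's fixed-point theorem handles (3)$\Rightarrow$(1) (and, combined with your (1)$\Rightarrow$(3), gives $X=\Fix(Z_X)$), and the real content, (3)$\Rightarrow$(2), is reduced to finding an element with trivial fixed space in the essential reflection group $Z_X|_Y$. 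That reduction is sound, and your conclusion $\Fix_V(g)=X\oplus\Fix_Y(g|_Y)=X$ is correct since $g$ preserves the orthogonal decomposition. The one place I would tighten the argument is the appeal to Springer's regular elements: $Z_X|_Y$ need not be irreducible, so ``a regular element whose order equals the largest degree'' is not quite the right statement (you would need a product of regular elements over the irreducible factors). A cleaner and fully general way to finish is the Orlik--Solomon identity $\sum_{w}t^{\dim\Fix(w)}=\prod_{i}(t+m_i)$: when the action is essential every exponent $m_i$ is positive, so the coefficient of $t^0$, namely $\prod_i m_i$, is nonzero and an element with trivial fixed space exists. Your approach buys self-containedness at the cost of invoking two nontrivial theorems (Steinberg and regular elements/Solomon's formula); the paper's buys brevity by outsourcing the hard equivalence to the literature.
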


\begin{proof}
  It is shown in \cite[Theorem~ 6.27]{orlikterao:arrangements} that the
  first two statements are equivalent. The second statement implies the
  third because $\Fix(g) = \Fix( \langle g\rangle)$ for $g\in G$. Finally,
  the third statement implies the first because if $H$ is a subgroup of $G$,
  then $\Fix(H)= \bigcap_{h\in H} \Fix(h)$, and by the second statement each
  subspace $\Fix(h)$ is in $L(\CA(V,G))$.
\end{proof}

Using the lemma, in the following, we frequently refer to subspaces in the
lattice of $\CA(V,G)$ instead of fixed point subspaces of elements, or
subgroups, of $G$. To simplify the notation, in the rest of this paper set
\[
\CA(G)= \CA(V,G), \quad\text{and} \quad \CA(C_X) =\CA(X, C_X)
\]
when $G$ and $V$ are fixed.

In ~\autoref{sec:prelim} it is shown that under the assumptions of the
theorem, if $\rho_X$ is surjective, then $\CA(C_X)= \CA(G)^X$. This focuses
attention on subspaces $X$ such that $\CA(C_X)= \CA(G)^X$. The subspaces in
the lattice of $\CA(G)$ with the property that $\CA(C_X)= \CA(G)^X$ are
classified for the infinite family of irreducible unitary reflection groups
in \autoref{sec:grpn} and for the thirty-four exceptional unitary reflection
groups in \autoref{sec:exc}. One consequence of the classification
is~\autoref{thm:ref}, which states that if either $\CA(C_X)= \CA(G)^X$ or
$\exp(C_X) \subseteq \exp(G)$, then $C_X$ acts on $X$ as a reflection
group. A second consequence of the computations in \autoref{sec:grpn} is an
elementary arithmetic characterization of the subspaces $X\in L(\CA(G))$
such that $C_X$ acts on $X$ as a reflection group when $G=G(r,p,n)$ is in
the infinite family of reflection groups (see \autoref{cor:cxref}). For the
exceptional reflection groups, the subspaces $X$ such that $C_X$ acts on $X$
as a reflection group are listed in an appendix.

\section{Reductions and preliminary results} \label{sec:prelim}

For general information about hyperplane arrangements and reflection groups
we refer the reader to \cite{orlikterao:arrangements}.

As above, suppose $V$ is a finite dimensional complex vector space, $G$ is a
finite subgroup of $\GL(V)$, and $X$ is a subspace of $V$. We assume also
that a positive, definite, hermitian form on $V$ is given and that $G$ is a
subgroup of the unitary group of $V$ with respect to this form.

In this section we make several reductions; state and prove
\autoref{thm:surj}, a preliminary result that may be viewed as a
strengthening of~\cite[Proposition 3.1] {douglassroehrle:invariants}; state
a second preliminary result, \autoref{thm:ref}; and complete the proof
of the forward implication in~\autoref{thm:main}, assuming the validity of
\autoref{thm:ref}. The proof of \autoref{thm:ref} and the completion of the
proof of~\autoref{thm:main} are given in~\autoref{sec:grpn} for the infinite
family of irreducible unitary reflection groups and in~\autoref{sec:exc} for
the exceptional irreducible unitary reflection groups.

\subsection*{Reductions}\label{ssec:red}

In this subsection we show that it is enough to prove~\autoref{thm:main}
when $X$ is chosen from a set of orbit representatives for the action of the
normalizer of $G$ in $\GL(V)$ on $L(\CA(G))$ and that it is enough to
prove~\autoref{thm:main} when $G$ acts faithfully on $V$ as an irreducible
reflection group.

Suppose $h\in \GL(V)$ normalizes $G$. Then $hZ_X h\inverse = Z_{h(X)}$ and
$hN_X h\inverse =N_{h(X)}$, so conjugation by $h$ induces an isomorphism
\[
c_h\colon C_{X} \xrightarrow{\ \cong\ } C_{h(X)}.
\]
Also, the linear transformation $h$ determines an algebra automorphism
\[
h^\#\colon \BBC[V] \xrightarrow{\ \cong\ } \BBC[V]\quad \text{by} \quad
h^\#(f)=f\circ h.
\]
The proof of the next proposition is straightforward and is omitted. 

\begin{proposition}\label{pro:redorb}
  Suppose $G\subseteq \GL(V)$ acts on $V$ as a finite reflection group,
  $h\in \GL(V)$ normalizes $G$, and $X \in L(\CA(G))$. Then
  \begin{enumerate}
  \item $c_h\colon C_{X} \to C_{h(X)}$ is an isomorphism that restricts to
    an isomorphism of reflection groups $C_X^\r\cong C_{h(X)}^\r$,
  \item $c_h$ determines bijections between $\CA(C_X)$ and $\CA(C_{h(X)})$,
    and between $L(\CA(C_X))$ and $L(\CA(C_{h(X)}))$,
  \item $h|_X\colon X\to h(X)$ determines a bijection between $\CA(G)^X$ and
    $\CA(G)^{h(X)}$, and
  \item $\rho_X$ is surjective if and only if $\rho_{h(X)}$ is surjective.
  \end{enumerate}
  Consequently, the conclusions of~\autoref{thm:main} hold for $X$ if and
  only if they hold for $h(X)$.
\end{proposition}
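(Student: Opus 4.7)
The plan is to observe that parts (1), (3), and (4) of the proposition have essentially been verified in the paragraphs leading up to the statement; the proof therefore reduces to collecting those observations, handling part (2) by transport of structure along $c_h$, and verifying the final ``consequently'' clause by inspecting each of the three conditions that appear in \autoref{thm:main}.

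For parts (1), (3), and (4) I would simply record the relevant pieces of the preceding discussion. The identities $hZ_X h\inverse = Z_{h(X)}$ and $hN_X h\inverse = N_{h(X)}$, together with the observation that $g\in N_X$ acts on $X$ as a reflection if and only if $hgh\inverse$ acts on $h(X)$ as a reflection, already supply (1). Because $h$ normalizes $G$, conjugation by $h$ permutes the reflections of $G$ and hence permutes $\CA(G)$, so the assignment $H\cap X \mapsto h(H)\cap h(X)$ furnishes the bijection $\CA(G)^X \leftrightarrow \CA(G)^{h(X)}$ claimed in (3). The commutative square exhibited in the preceding paragraph, together with the fact that $h^\#$ is an algebra isomorphism, yields (4) immediately.

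For part (2), I would argue as follows. By (1), $c_h$ restricts to an isomorphism of reflection groups $C_X^\r \cong C_{h(X)}^\r$. If $r \in C_X^\r$ is a reflection with fixed hyperplane $\Fix(r) \subseteq X$, then $c_h(r) = hrh\inverse$ is a reflection in $C_{h(X)}^\r$ whose fixed hyperplane in $h(X)$ is $h(\Fix(r))$. The correspondence between reflections and reflecting hyperplanes therefore produces a bijection $\CA(C_X) \leftrightarrow \CA(C_{h(X)})$, and taking intersections extends this to the required bijection $L(\CA(C_X)) \leftrightarrow L(\CA(C_{h(X)}))$.

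For the final consequence, the three conditions characterizing surjectivity in \autoref{thm:main} are: surjectivity of $\rho_X$, the arrangement equality $\CA(C_X) = \CA(G)^X$, and the exponent inclusion $\exp(C_X) \subseteq \exp(G)$. Surjectivity transfers by (4); the arrangement equality transfers by combining (2) and (3), since both $\CA(C_X)$ and $\CA(G)^X$ map bijectively to their counterparts on $h(X)$ under the restriction of $h$; and the exponent inclusion transfers because $\exp(C_X) = \exp(C_X^\r) = \exp(C_{h(X)}^\r) = \exp(C_{h(X)})$ by (1). I do not expect a real obstacle here — the whole proposition is a bookkeeping exercise that packages the remarks already made — but the one point deserving explicit care is in (2), where the bijection on arrangements is induced by $c_h$ only indirectly, via the assignment sending a reflection to its reflecting hyperplane.
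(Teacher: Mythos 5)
Your proposal is correct and follows essentially the same route as the paper: the paper states this proposition explicitly as a summary of the observations in the paragraphs immediately preceding it (the conjugation identities $hZ_Xh\inverse=Z_{h(X)}$ and $hN_Xh\inverse=N_{h(X)}$, the commutative square involving $h^\#$, and the fact that $h$ permutes $\CA(G)$), and gives no further argument. Your extra care in part (2), passing from the isomorphism $C_X^\r\cong C_{h(X)}^\r$ to the bijection of arrangements via the reflection--hyperplane correspondence, is exactly the intended (and correct) filling-in of the one detail the paper leaves implicit.
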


In the rest of this subsection we assume that $G$ is a finite unitary
reflection group and $X$ is in the lattice of $\CA(G)$.

Set $V_f=\Fix(G)$ and let $V_r$ be the orthogonal complement of $V_f$ in
$V$. Obviously $V_f\subseteq X$. Let $X_r$ be the orthogonal complement of
$V_f$ in $X$. The restriction maps $\rhoXtilde$ and $\rho_X$ may be
identified with
\[
\widetilde \rho_{X_r}\otimes \id \colon \BBC[V_r]\otimes \BBC[V_f] \to
\BBC[X_r]\otimes \BBC[V_f]
\]
and
\[
\rho_{X_r}\otimes \id \colon
\BBC[V_r]^G\otimes \BBC[V_f] \to \BBC[X_r]^{C_X}\otimes \BBC[V_f],
\]
respectively, where $\widetilde \rho_{X_r}$ and $\rho_{X_r}$ are given by
restriction.  Clearly, $\rho_X$ is surjective if and only if $\rho_{X_r}$ is
surjective.

Because $V_f$ is contained in every reflecting hyperplane of $G$ in $V$,
there are canonical bijections between $\CA(V,G)$ and $\CA(V_r,G)$, and
between $\CA(V,G)^X$ and $\CA(V_r,G)^{X_r}$. Let $O_f$ denote the multiset
containing $0$ with multiplicity $\dim V_f$. Then
\[
\exp(G,V)= \exp(G,V_r)\amalg O_f\quad \text{and} \quad \exp(C_X, X)=
\exp(C_X, X_r)\amalg O_f.
\]
Thus, $\CA(C_X) =\CA(G)^X$ and $\exp(C_X) \subseteq \exp(G)$ if and only if
$ \CA(X_r,C_X) =\CA(V_r, G)^{X_r}$ and $\exp(C_X, X_r) \subseteq
\exp(G,V_r)$.

It follows from the observations in the preceding two paragraphs that it is
enough to prove~\autoref{thm:main} when $G$ acts faithfully on $V$.

Finally, suppose $G$ acts faithfully on $V$ as a reducible reflection
group. Then there are subgroups $G_1$ and $G_2$ of $G$, complementary
orthogonal subspaces $V_1$ and $V_2$ of $V$, and complementary orthogonal
subspaces $X_1$ and $X_2$ of $X$ such that (1) $G_i$ acts faithfully on $V_i$ as
a unitary reflection group for $i=1,2$, (2) $G\cong G_1\times G_2$, the action
of $G$ on $V$ may be identified with the action of $G_1\times G_2$ on
$V_1\oplus V_2$, and $X\cong X_1\oplus X_2$ of $V_1\oplus V_2$.  It is
straightforward to check that~\autoref{thm:main} holds for $V$, $G$, and $X$
if and only if it holds for $V_i$, $G_i$, and $X_i$ for $i=1,2$. Thus, it is
enough to prove~\autoref{thm:main} when $G$ acts on $V$ as an irreducible
reflection group.

Recall that the irreducible unitary reflection groups are classified in
~\cite{shephardtodd:finite} as one infinite, three parameter family, 
the groups $G(r,p,n)$, and 34 exceptional groups denoted by $G_4$,
\dots, $G_{37}$.

\subsection*{Proof that if \texorpdfstring{$\mathbf{\rho_X}$} {rho} is
  surjective, then \texorpdfstring{$\mathbf{\CA(C_X)= \CA(G)^X}$} {ACX=AX}}

In~\cite[\S3] {douglassroehrle:invariants} it is shown that if $G$ and $C_X$
act as unitary reflection groups, $\CA(C_X) \subseteq \CA(G)^X$, and
$\rho_X$ is surjective, then $\CA(G)^X = \CA(C_X)$ and $\exp(C_X) \subseteq
\exp(G)$. In this subsection we prove a variant of this result that does not
include the hypothesis that $C_X$ acts on $X$ as a reflection group or the
conclusion that $\exp(C_X) \subseteq \exp(G)$.

\begin{theorem}\label{thm:surj}
  Suppose $G$ acts on $V$ as a unitary reflection group, $X$ is in the
  lattice of $\CA(G)$, and the restriction map $\rho_X\colon \BBC[V]^G \to
  \BBC[X]^{C_X}$ is surjective. Then $\CA(C_X) = \CA(G)^X$.
\end{theorem}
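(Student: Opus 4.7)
My plan is to prove the two containments $\CA(C_X) \subseteq \CA(G)^X$ and $\CA(G)^X \subseteq \CA(C_X)$ separately, each time using the surjectivity of $\rho_X$ as the bridge between $G$-invariant data on $V$ and $C_X$-invariant data on $X$.

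For the first inclusion $\CA(C_X) \subseteq \CA(G)^X$, I fix $Y \in \CA(C_X)$ and let $\bar{r} \in C_X^\r$ be a reflection with $\Fix_X(\bar{r}) = Y$, of order $e_Y$. I would construct a $C_X$-invariant polynomial $p_Y$ on $X$ having the linear form $\ell_Y$ defining $Y$ as a factor with multiplicity $e_Y$ --- for instance by taking a normalized product over the $C_X$-orbit of $Y$, namely $p_Y = \prod_{Y' \in C_X \cdot Y} \ell_{Y'}^{e_{Y'}}$. By surjectivity of $\rho_X$, there is an $F \in \BBC[V]^G$ with $F|_X = p_Y$. Since $F|_X$ vanishes along $Y$ to order $e_Y$, the polynomial $F$ itself vanishes along $Y$ as a subspace of $V$. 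By examining the codimension-one irreducible components of the zero locus of $F$ in $V$ and exploiting the $G$-invariance of $F$, one can extract a reflecting hyperplane $H \in \CA(G)$ with $H \cap X = Y$ and $X \not\subseteq H$, giving $Y \in \CA(G)^X$.

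For the reverse inclusion $\CA(G)^X \subseteq \CA(C_X)$, I would take $H \in \CA(G)$ with $X \not\subseteq H$, set $Y := H \cap X$, and work with the $G$-invariant polynomial $F_H = \prod_{H' \in G \cdot H} \alpha_{H'}^{e_{H'}}$. I would handle the contribution of any $H' \in G \cdot H$ with $H' \supseteq X$ by factoring out the resulting zero factors, which is controlled by Steinberg's theorem identifying $Z_X$ as generated by reflections of $G$ pointwise fixing $X$. The restriction $F_H|_X$ is then a nonzero $C_X$-invariant polynomial factoring as a product of powers of linear forms on $X$. The multiplicities $e_{H'}$ appearing in this factorization, matched to the orders of pointwise cyclic stabilizers in $C_X^\r$, would then force each linear-form factor $\alpha_{H'}|_X$ to define a reflecting hyperplane of $C_X^\r$, yielding $Y \in \CA(C_X)$.

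The main obstacle in each direction is the step of passing from a factorization of an invariant polynomial into powers of linear forms to the identification of the corresponding hyperplanes as reflecting hyperplanes of the relevant group. In general, not every $C_X$-invariant polynomial that factors as a product of linear forms has its factors defining reflecting hyperplanes of $C_X^\r$ (there can be $C_X$-orbits of non-reflecting hyperplanes in $X$), so the argument must use the precise multiplicity structure inherited from $F$ or $F_H$ together with Steinberg's description of $Z_X$ and the classical correspondence between the exponents of a reflection group and the multiplicities occurring in its discriminant. Reconciling the $G$-orbit structure on $\CA(G)$ with the $C_X$-orbit structure on $\CA(C_X)$ under the filtration by subspaces containing $X$ versus meeting $X$ properly will be the most technically delicate point.
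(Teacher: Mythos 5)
Your proposal has genuine gaps in both directions, and in each case the gap sits exactly where the real content of the theorem lies.

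For the inclusion $\CA(C_X)\subseteq\CA(G)^X$, the step ``by examining the codimension-one irreducible components of the zero locus of $F$ \dots one can extract a reflecting hyperplane $H\in\CA(G)$'' does not go through: $F$ is merely some $G$-invariant polynomial whose restriction is $p_Y$, and the irreducible components of its zero locus passing through the codimension-$\geq 2$ subspace $Y$ of $V$ are hypersurfaces with no reason to be hyperplanes, let alone reflecting hyperplanes of $G$. This inclusion is in fact true unconditionally and needs no invariant theory at all: if $Y\in\CA(C_X)$ then $Y=\Fix(g)\cap X$ for some $g\in N_X$, and $\Fix(g)$ lies in $L(\CA(G))$, hence is an intersection $H_1\cap\dotsm\cap H_k$ of reflecting hyperplanes; since $\codim_X Y=1$, already $Y=H_i\cap X$ for some $i$. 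That is the paper's argument, and surjectivity plays no role here.

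For the inclusion $\CA(G)^X\subseteq\CA(C_X)$ the problem is more serious: your argument never actually uses the surjectivity of $\rho_X$. The polynomial $F_H=\prod_{H'\in G\cdot H}\alpha_{H'}^{e_{H'}}$ is already $G$-invariant, so its restriction lies in the image of $\rho_X$ whether or not $\rho_X$ is surjective; and if some $H'$ in the orbit contains $X$ the restriction is identically zero, which you cannot repair by ``factoring out'' zero factors while staying inside $\BBC[V]^G$. Since the conclusion is false without surjectivity (the paper's tables give many $X$ with $\CA(C_X)\neq\CA(G)^X$, e.g.\ the $A_1$ orbit in $H_3$), no argument of this shape can close the gap you yourself identify at the end, namely that a $C_X$-invariant product of powers of linear forms need not have its factors cutting out reflecting hyperplanes of $C_X$. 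The paper's proof is of a genuinely different nature: it uses surjectivity (via Richardson) to identify $X/C_X$ with $GX/G$, computes via Steinberg's Jacobian theorem and the Denef--Loeser identity that the quotient map $X\to X/C_X$ is ramified exactly along $\bigcup_{K\in\CA(G)^X}K$, and then invokes Benson's theorem that a height-one prime of $\BBC[X]$ is ramified over $\BBC[X]^{C_X}$ precisely when it is the ideal of a reflecting hyperplane of $C_X$. Some form of this ramification (or a comparably strong) input is what your sketch is missing.
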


Before proving the theorem we need some preliminary results.

First, suppose $Y$ and $Z$ are complex, affine varieties and $\varphi\colon
Y\to Z$ is a finite, surjective morphism. Let $\varphi^\#\colon \BBC[Z]\to
\BBC[Y]$ be the comorphism. Then $\varphi^\#$ is injective and we may
consider $\BBC[Z]$ as a subalgebra of $\BBC[Y]$. Suppose $\fp$ is a prime
ideal in $\BBC[Y]$ and $\fq= \fp \cap \BBC[Z]$ is the contraction of $\fp$
to $\BBC[Z]$. Following \cite{auslanderbuchsbaum:ramification}, we say that
$\fp$ in $\BBC[Y]$ is unramified over $\BBC[Z]$ if firstly $\fq$ generates
the maximal ideal $\fp \BBC[Y]_{\fp}$ of $\BBC[Y]_{\fp}$, where
$\BBC[Y]_{\fp}$ is the localization of $\BBC[Y]$ at $\fp$, and secondly
$\BBC[Y]_{\fp} / \fp \BBC[Y]_{\fp}$ is a separable field extension of
$\BBC[Z]_{\fq} / \fq \BBC[Z]_{\fq}$. If $\fp$ is not unramified, then it is
ramified. Let $y\in Y$ and let $\fm_{Y,y}$ be the maximal ideal of functions
in $\BBC[Y]$ that vanish at $y$. The morphism $\varphi$ is said to be
ramified or unramified at $y$ if $\fm_{Y,y}$ is ramified or unramified over
$\BBC[Z]$.

\begin{lemma}\label{lem:ram}
  Suppose $\varphi\colon Y\to Z$ is a finite, surjective morphism of affine
  varieties.
  \begin{enumerate}
  \item Let $y\in Y$. Then $\varphi$ is unramified at $y$ if and only if the
    map on Zariski tangent spaces $d\varphi_y\colon T_yY \to T_{\varphi(y)}
    Z$ is injective. \label{lem:ram:1}
  \item Let $\fp$ be a prime ideal in $\BBC[Y]$ and let $\CZ(\fp)$ denote
    the zero set of $\fp$ in $Y$. Then $\fp$ is ramified over $\BBC[Z]$ if
    and only if $\varphi$ is ramified at $y$ for every $y\in
    \CZ(\fp)$. \label{lem:ram:2}
  \end{enumerate}
\end{lemma}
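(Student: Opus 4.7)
The plan is to handle the two parts with standard commutative-algebra tools. Write $R = \BBC[Y]$ and $S = \BBC[Z]$, and regard $S$ as a subring of $R$ via $\varphi^\#$.

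For~\autoref{lem:ram:1}, I would set $\fm = \fm_{Y,y}$ and $\fn = \fm_{Z,\varphi(y)}$. Since $\BBC$ is algebraically closed, the residue-field extension $S_\fn/\fn S_\fn \hookrightarrow R_\fm/\fm R_\fm$ is just $\BBC \hookrightarrow \BBC$, so the separability clause in the definition of unramified is automatic, and unramified at $y$ reduces to the single condition $\fn R_\fm = \fm R_\fm$. Applying Nakayama's lemma to the finitely generated $R_\fm$-module $\fm R_\fm / \fn R_\fm$ rephrases this as surjectivity of the $\BBC$-linear cotangent map $\fn/\fn^2 \to \fm/\fm^2$ induced by $\varphi^\#$, where one uses the standard identification $\fm R_\fm / \fm^2 R_\fm \cong \fm/\fm^2$. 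The $\BBC$-dual of this cotangent map is precisely $d\varphi_y\colon T_y Y \to T_{\varphi(y)} Z$, so surjectivity on cotangent spaces is equivalent to injectivity of $d\varphi_y$.

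For~\autoref{lem:ram:2}, I would invoke the module of relative K\"ahler differentials $M = \Omega^1_{R/S}$. A standard characterization, which I would quote from \cite{auslanderbuchsbaum:ramification} or any reference on unramified morphisms, says that a prime $\fp$ of $R$ is unramified over $S$ if and only if $M_\fp = 0$; applied at a maximal ideal this recovers~\autoref{lem:ram:1}. Because $\varphi$ is finite, $R$ is a finite $S$-module, hence $M$ is a finitely generated $R$-module, and therefore $\operatorname{Supp}(M) = V(\operatorname{Ann}_R M)$ is Zariski closed in $\operatorname{Spec}(R)$. Since $R$ is a Jacobson ring, $\fp$ equals the intersection of the maximal ideals containing it, so $\operatorname{Ann}_R M \subseteq \fp$ if and only if $\operatorname{Ann}_R M \subseteq \fm$ for every maximal $\fm \supseteq \fp$. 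Translating back through the characterization $M_\fq = 0 \iff \fq$ unramified, at both $\fq = \fp$ and $\fq = \fm$, yields the equivalence of~\autoref{lem:ram:2}.

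The main piece to invoke carefully is the characterization of unramified primes in terms of the vanishing of $\Omega^1_{R/S}$ upon localization; modulo this standard input, both parts amount to routine manipulations of localizations, annihilators, and duals, and I do not expect a serious obstacle.
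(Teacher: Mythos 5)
Your proof is correct and follows the same two-step skeleton as the paper's: for part~(1), reduce to surjectivity of the cotangent map and dualize; for part~(2), observe that the ramification locus is cut out by an explicit ideal and combine this with the fact that $\fp=\bigcap_{y\in\CZ(\fp)}\fm_{Y,y}$. The differences lie only in the inputs. For~(1) the paper cites Altman--Kleiman for the equivalence ``unramified at $y$ iff the cotangent map is surjective,'' whereas you derive it directly from the definition via Nakayama's lemma and the observation that the residue-field extension is $\BBC\hookrightarrow\BBC$; your version is more self-contained and equally valid. For~(2) the paper invokes the Auslander--Buchsbaum homological different $\FH$ and their Theorem~2.7 (a prime $\fr$ is ramified iff $\FH\subseteq\fr$), while you use $\operatorname{Ann}_R\Omega^1_{R/S}$ and the standard criterion that a prime is unramified iff the localized module of K\"ahler differentials vanishes; these two ideals play identical roles here, since each cuts out the ramification locus, so the closing step (Jacobson property of $R$, or equivalently the Nullstellensatz) is the same in both arguments. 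The one point to be careful about, which you correctly flag as the input to be quoted, is that the K\"ahler-differential criterion must be matched against the Auslander--Buchsbaum definition of ``unramified'' that the paper uses; for finite-type algebras over $\BBC$ this is standard, so there is no gap.
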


\begin{proof}
  It follows from \cite[Proposition 3.6 (i)]{altmankleiman:introduction}
  that $\varphi$ is unramified at $y$ if and only if the cotangent map $
  T_{\varphi(y)}^* Z \to T_y^* Y$ is surjective. This implies the first
  statement.

  Let $\FH= \FH_{\BBC[Y]/\BBC[Z]}$ denote the homological different defined
  by Auslander and Buchsbaum \cite[\S2]{auslanderbuchsbaum:ramification}. It
  is shown in \cite[Theorem 2.7]{auslanderbuchsbaum:ramification} that a
  prime ideal $\fr\subseteq \BBC[Y]$ is ramified over $\BBC[Z]$ if and only
  if $\FH \subseteq \fr$. The second statement now follows from the fact
  that $\fp = \cap_{y\in \CZ(\fp)} \fm_{Y,y}$.
\end{proof}

Returning to the setup in the statement of the theorem, suppose that $\dim
X=a$, choose $g\in G$ such that $X=\Fix(g)$, and choose a basis $\{b_1,
\dots, b_n\}$ of $V$ consisting of eigenvectors for $g$, indexed so that
$\{b_1, \dots, b_a\}$ is a basis of $X$. Let $\{x_1, \dots, x_n\}$ denote
the dual basis of $V^*$. Then the restrictions of $x_1$, \dots, $x_a$ to $X$
form a basis of $X^*$.

\begin{lemma}\label{lem:pz}
  Suppose $a+1\leq j\leq n$. Then $\frac {\partial f} {\partial x_j}|_X = 0$
  for all $f\in \BBC[V]^G$.
\end{lemma}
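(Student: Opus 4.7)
I would prove \autoref{lem:pz} by a short differential argument: $G$-invariance of $f$ in particular implies $g$-invariance, and differentiating the identity $f = f\circ g$ at a point $p\in X$ forces $df_p$ to be $g$-invariant as a functional on $V$, from which the conclusion is immediate.

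Concretely, I would fix $p\in X$ and apply the chain rule to $f = f\circ g$. Since $g$ is linear, its differential at every point is $g$ itself, and since $p\in X = \Fix(g)$ we have $gp = p$, so
\[
df_p \;=\; d(f\circ g)_p \;=\; df_p\circ g;
\]
equivalently $df_p \in (V^*)^g$. Next, I would identify $(V^*)^g$. Because $g$ is unitary, hence diagonalizable, $V$ is the direct sum of its $g$-eigenspaces, and by the choice of basis $X = \Span(b_1,\dots,b_a)$ is the $1$-eigenspace while $W := \Span(b_{a+1},\dots,b_n)$ is the sum of all other eigenspaces. For any $\phi \in (V^*)^g$ and any $\lambda$-eigenvector $v$ with $\lambda\ne 1$, the relation $\phi(v) = \phi(gv) = \lambda\phi(v)$ forces $\phi(v)=0$; thus every element of $(V^*)^g$ vanishes on $W$. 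In particular $df_p(b_j) = 0$ for every $j > a$.

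Finally, since $\{x_1,\dots,x_n\}$ is the dual basis to $\{b_1,\dots,b_n\}$ we have $df_p(b_j) = \frac{\partial f}{\partial x_j}(p)$, and as $p\in X$ was arbitrary this yields $\frac{\partial f}{\partial x_j}\big|_X = 0$ for all $j > a$. There is no real obstacle in this argument; the only point worth emphasizing is that $G$-invariance of $f$ is used solely through the action of the single element $g$ with $\Fix(g)=X$, after which the lemma reduces to the standard fact that a fixed vector of a diagonalizable operator on $V^*$ must annihilate every non-trivial eigenspace of the dual operator on $V$.
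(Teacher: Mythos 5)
Your proof is correct. The paper establishes the same vanishing by quoting an identity of Denef and Loeser: for eigenvectors $v_1,v_2$ of $h\in G$ with eigenvalues $\lambda_1,\lambda_2$ and $f\in\BBC[V]^G$ homogeneous of degree $d$, one has $\lambda_2 D_{v_2}(f)(v_1)=\lambda_1^{1-d}D_{v_2}(f)(v_1)$; specializing to $h=g$, $v_1=x\in X$ (so $\lambda_1=1$) and $v_2=b_j$ (so $\lambda_2\ne1$) gives $D_{b_j}(f)(x)=0$. You instead differentiate the invariance relation $f=f\circ g$ directly at a point $p\in\Fix(g)$, obtaining $df_p=df_p\circ g$, and then observe that a $g$-fixed functional must annihilate every eigenvector with eigenvalue $\ne1$. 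The underlying mechanism is identical, but your route is self-contained and slightly more elementary: it needs neither the homogeneity of $f$ nor the cited general identity, only the chain rule and the diagonalizability of $g$ (which holds since $g$ is unitary, or simply of finite order). The Denef--Loeser identity buys more in general --- it constrains derivatives at arbitrary eigenvectors $v_1$, not just fixed points --- but for this lemma your specialization loses nothing. All the details check out: $f\circ g=f$ is the correct form of invariance under the paper's convention $(g\cdot f)(v)=f(g\inverse v)$, the vectors $b_{a+1},\dots,b_n$ do have eigenvalues $\ne1$ since $X$ is exactly the $1$-eigenspace, and $df_p(b_j)=\frac{\partial f}{\partial x_j}(p)$ because the $x_i$ are dual to the $b_i$.
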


\begin{proof}
  Note that $X=\Fix(g)$ is the $1$-eigenspace of $g$. Denef and Loeser
  \cite{denefloeser:regular} have shown that if $h\in G$, $v_1$ and $v_2$
  are eigenvectors for $h$ with eigenvalues $\lambda_1$ and $\lambda_2$
  respectively, and $f\in \BBC[V]^G$ is homogeneous with degree $d$, then
  $\lambda_2 D_{v_2}(f)(v_1)= \lambda_1^{1-d} D_{v_2}(f)(v_1)$, where
  $D_v(f)$ denotes the directional derivative of $f$ in the direction of
  $v$. Taking $h=g$, $v_2= b_j$, and $v_1= x\in X$, we have $\lambda_2
  D_{b_j}(f)(x)= D_{b_j}(f)(x)$ where $\lambda_2 \ne 1$. It follows that $0=
  D_{b_j}(f)(x)= \frac {\partial f} {\partial x_j}|_x$ for all $x\in X$ and
  all $f\in \BBC[V]^G$.  
\end{proof}

\begin{proof}[Proof of~\autoref{thm:surj}]
  As shown in \cite{douglassroehrle:invariants}, it is always the case that
  $\CA(C_X) \subseteq \CA(G)^X$: Suppose $K$ is in $\CA(C_X)$. By assumption
  there is a $g$ in $N_X$ so that $\Fix(g)\cap X= K$. Then $\Fix(g)$ is in
  the lattice of $\CA$, say $\Fix(g)= H_1\cap \dotsm\cap H_k$, where $H_1$,
  \dots, $H_k$ are in $\CA$. Thus $K= H_1\cap \dots\cap H_k \cap X$. Since
  $\dim K=\dim X-1$, it follows that $K=H_i\cap X$ for some $i$ and so $K$
  is in $\CA(G)^X$.

  Conversely, we need to show that $ \CA(G)^X \subseteq \CA(C_X)$. Set
  \[
  X_\ram= \bigcup_{K\in \CA(G)^X} K
  \]
  and choose a set of homogeneous generators $\{f_1 \dots, f_n\}$ of
  $\BBC[V]^G$. Consider the commutative diagram
  \[
  \xymatrix{ X \ar[r]^-{\pi_1} \ar[d]^{i} & X / C_X \ar[r]^{\pi_2}_{\cong} &
    GX / G \ar[d]^{i_1} \ar[r]^-{\overline F}_{\cong} & F(GX/G) \ar[d]^{i_2}
    \\
    V \ar[rr]^{\pi} & & V/G \ar[r]^{F}_{\cong} & \BBC^n }
  \]
  where the maps are defined as follows: $\pi$, $\pi_1$, and $\pi_2$ are the
  natural quotient maps; $i$, $i_1$, and $i_2$ are the inclusions; $F(Gv)_i
  = f_i(v)$ for $1\leq i\leq n$; and $\overline F$ is obtained from $F$ by
  restriction. Note that $\rho_X$ is the comorphism of $i_1 \circ
  \pi_2$. Richardson \cite[2.2.1]{richardson:normality} has shown that
  $\rho_X$ is surjective if and only if $\pi_2$ is an isomorphism and so in
  our situation, $\pi_2$ is an isomorphism. It follows from the theorem of
  Chevalley-Shephard-Todd that $F$ is an isomorphism, hence, so is
  $\overline F$.

  Suppose $x\in X$. Then we have a commutative diagram of Zariski tangent
  spaces
  \[
  \xymatrix{ T_x X \ar[r]^-{d(\pi_1)_x} \ar[d]^{di_x} & T_{\pi_1(x)} X / C_X
    \ar[r] & T_{\pi(x)} GX / G \ar[d] \ar[r] & T_{F(\pi(x))} F(GX/G) \ar[d]
    \\ 
    T_x V \ar[rr]^{d\pi_x} & & T_{\pi(x)} V/G \ar[r]^{dF_{\pi(x)}} &
    T_{F(\pi(x))} \BBC^n .}
  \]
  The matrix of the composition $dF_{\pi(x)}\circ d\pi_x$ with respect to
  suitable bases is given by the $n\times n$ Jacobian matrix $J(F\pi)(x)$,
  whose $(i,j)$-entry is $\frac {\partial f_i} {\partial x_j}|_x$. Steinberg
  \cite{steinberg:invariants} has shown that the nullity of $J(F\pi)(x)$ is
  the maximum number of linearly independent hyperplanes in $\CA(G)$ that
  contain $x$. Thus the rank of $J(F\pi)(x)$ is at most $a$ and is equal to
  $a$ if and only if $x\not \in X_\ram$. It follows from \autoref{lem:pz}
  that the last $n-a$ columns of $J(F\pi)(x)$ are zero and so the matrix of
  the composition $dF_{\pi(x)} \circ d\pi_x \circ di_x$ with respect to
  suitable bases is the $n\times a$ matrix obtained from $J(F\pi)(x)$ by
  deleting the last $n-a$ columns. It follows that the rank of $d(\pi_1)_x$
  is at most $a$ and equal to $a$ if and only if $x\not \in
  X_\ram$. Therefore, $d(\pi_1)_x$ is injective if and only if $x\not \in
  X_\ram$ and so it follows from \autoref{lem:ram}\autoref{lem:ram:1} that
  $\pi_1$ is ramified at $x$ if and only if $x\in X_\ram$.

  Now suppose that $K\in \CA(G)^X$ and let $\CI(K)$ be the ideal of $K$ in
  $\BBC[X]$. Then $\CI(K)$ is a prime ideal in $\BBC[X]$ with height one and
  it follows from \autoref{lem:ram}\autoref{lem:ram:2} that $\CI(K)$ is
  ramified over $\BBC[X/C_X]= \BBC[X]^{C_X}$. Benson
  \cite[\S3.9]{benson:polynomial} has shown that if $\fp\subseteq \BBC[X]$
  is a height one prime ideal, then $\fp$ is ramified over $\BBC[X]^{C_X}$
  if and only if $\fp$ is the ideal of a reflecting hyperplane of
  $C_X$. Thus, $K\in \CA(C_X)$ as claimed.
\end{proof}

\subsection*{The subgroups \texorpdfstring{ $\mathbf{
      C_X^{\operatorname{\mathbf {ref}}}}$} {cxref}}

The subgroups $C_X^\r$ are determined in \autoref{pro:cxref} for the groups
$G(r,p,n)$ and are listed in the tables in the appendix for the exceptional
irreducible unitary reflection groups, and the subspaces $X\in L(\CA(G))$
such that $C_X^\r=C_X$ are characterized in \autoref{cor:cxref} for the
infinite family of irreducible unitary reflection groups and are listed in
the tables in the appendix for the exceptional irreducible unitary
reflection groups.

\begin{theorem}\label{thm:ref}
  Suppose $G$ acts on $V$ as a unitary reflection group and $X$ is in the
  lattice of $\CA(G)$. If either $\CA(C_X)= \CA(G)^X$ or $\exp(C_X)\subseteq
  \exp(G)$, then $C_X^\r=C_X$ and thus $C_X$ acts on $X$ as a unitary
  reflection group.
\end{theorem}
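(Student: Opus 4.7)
The plan is to establish \autoref{thm:ref} by case-by-case analysis following the Shephard--Todd classification. By the reductions of the previous subsection, it is enough to treat the case where $G$ acts faithfully and irreducibly on $V$, so $G$ is either $G(r,p,n)$ in the infinite family or one of the $34$ exceptional groups $G_4,\dots,G_{37}$. I would argue the contrapositive: assume $C_X^{\r} \neq C_X$, and show that \emph{both} $\CA(C_X) \neq \CA(G)^X$ and $\exp(C_X) \not\subseteq \exp(G)$. Since the Remark after \autoref{thm:main} shows that the two hypotheses of \autoref{thm:ref} are genuinely independent, it is necessary to rule each of them out separately when $C_X^{\r} \neq C_X$.

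For the infinite family $G=G(r,p,n)$, I would first parametrize $L(\CA(G))$ combinatorially --- each $X$ is determined by a partition of $\{1,\dots,n\}$ together with some root-of-unity data on the blocks --- and from this description read off explicit presentations of $N_X$, $Z_X$, and hence $C_X$. Isolating the elements of $C_X$ that act on $X$ as reflections yields the description of $C_X^{\r}$ in \autoref{pro:cxref}; comparing orders then produces the arithmetic criterion for $C_X=C_X^{\r}$ recorded in \autoref{cor:cxref}. For those $X$ with $C_X^{\r}\subsetneq C_X$ this description exhibits a concrete generator of $C_X/C_X^{\r}$ acting on $X$ as a nontrivial diagonal transformation whose fixed subspace has codimension at least two. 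Such an element (i) contributes no new hyperplane to $\CA(C_X)$, so that $\CA(C_X)\subsetneq\CA(G)^X$, and (ii) forces $\exp(C_X)$ to contain a value not appearing in $\exp(G)$, as one checks by a direct divisibility comparison with the known degrees of $G(r,p,n)$.

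For the 34 exceptional groups the argument is computational. By \autoref{pro:redorb} it is enough to consider one representative $X$ from each orbit of the normalizer of $G$ in $\GL(V)$ on $L(\CA(G))$; for each representative one computes $C_X$, identifies its reflections on $X$ to obtain $C_X^{\r}$, and compares $\CA(C_X)$ with $\CA(G)^X$ and $\exp(C_X)$ with $\exp(G)$. These data are recorded in the tables in the appendix, from which one reads off that every $X$ with $C_X^{\r}\neq C_X$ satisfies both $\CA(C_X)\neq \CA(G)^X$ and $\exp(C_X)\not\subseteq\exp(G)$.

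The main obstacle will be the infinite-family case rather than the exceptional groups. The delicate point is bookkeeping inside $C_X$: an element of $N_X$ can act on $X$ as a nontrivial scalar (or more generally a non-reflection diagonal) arising from a multi-dimensional coordinate block on which the element is a root of unity, and it is precisely such elements that may or may not survive modulo $Z_X$. Determining exactly when such non-reflection diagonal elements persist in $C_X$ --- and then matching the cyclic factor they contribute to $\exp(C_X)$ against the Shephard--Todd degrees of $G(r,p,n)$ to exclude them from $\exp(G)$ --- is the heart of the argument.
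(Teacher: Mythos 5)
Your overall architecture coincides with the paper's: reduce to the faithful irreducible case, handle $G(r,p,n)$ by an explicit description of $N_X$, $Z_X$, $C_X$, $C_X^\r$ (this is \autoref{pro:cxref} and \autoref{cor:cxref}), and handle the exceptional groups by machine computation recorded in tables. Recasting the statement as its contrapositive is harmless. The problem is that both mechanisms you propose for the infinite-family case are non sequiturs. For the arrangement half: the fact that a generator of $C_X/C_X^\r$ acts diagonally with fixed-point codimension at least two, and hence "contributes no new hyperplane to $\CA(C_X)$," says nothing about whether $\CA(C_X)\subsetneq\CA(G)^X$. By definition $\CA(C_X)=\CA(X,C_X^\r)$, so non-reflections never contribute hyperplanes; the issue is whether the \emph{reflections} of $C_X$ account for every hyperplane of the restricted arrangement, and to decide that you must actually know $\CA(G_p)^X$. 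The paper imports the Orlik--Solomon--Terao computation that $\CA(G_p)^X$ is $\CA_a^a$ or $\CA_a^{\psi_1}$ and compares it with the reflection arrangement of the product $G(r,p_{i_1},m_1)\times\dotsb\times G(r,p_{i_c},m_c)$; it is this comparison (\autoref{thm:im}) that forces $\lambda$ to have a single distinct part, whence $C_X=C_X^\r$ by \autoref{cor:cxref}. Your sketch omits the restricted-arrangement computation entirely, and without it the conclusion cannot be drawn.

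For the exponent half the proposed mechanism is also wrong: the paper defines $\exp(C_X)$ to be $\exp(C_X^\r)$, so the "cyclic factor contributed by the non-reflection diagonal element" is invisible to $\exp(C_X)$ and cannot be the source of an exponent outside $\exp(G)$. What actually has to be shown (this is \autoref{cor:exp}) is that when $k=n$ and $\lambda$ has $c\geq 2$ distinct parts, the multiset of exponents of the reflection group $G(r,p_{i_1},m_1)\times\dotsb\times G(r,p_{i_c},m_c)$ cannot be contained in $\{r-1,2r-1,\dots,(n-1)r-1,nr/p-1\}$; this is a genuine case analysis (the special case $p=n$, the multiplicity of $r-1$, the possible coincidence with $nr/p-1$), not a one-line divisibility check. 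The exceptional-group portion of your plan is sound and matches the paper, but as written the $G(r,p,n)$ case has two real gaps.
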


\begin{proof} 
  As in the previous subsection it is enough to prove the theorem when $G$
  acts faithfully on $V$ as an irreducible reflection group. If $G=G(r,p,n)$
  for some $r,p,n$, then the result follows from \autoref{cor:exp} and
  \autoref{cor:ref}. If $G$ is one of the exceptional unitary reflection
  groups, then the result is proved in \autoref{pro:excref}.
\end{proof}

Examples show that the converse of the theorem is false; see
\autoref{rem:table}.

\subsection*{Proof of the forward implication in~\autoref{thm:main}}

Fix $X$ in $L(\CA(G))$ and suppose that $\rho_X\colon \BBC[V]^{G} \to
\BBC[X]^{ C_X}$ is surjective. We may assume that $G$ acts faithfully on
$V$. By~\autoref{thm:surj}, $\CA(C_X) = \CA(G)^X$ and so
by~\autoref{thm:ref}, $C_X$ acts on $X$ as a reflection group. Thus
$\BBC[V]^{G}$ and $\BBC[X]^{ C_X}$ are polynomial algebras and
by~\cite[Lemma 4.1]{richardson:normality}, if $\dim X=a$, there are
homogeneous generators $\{f_1, \dots, f_n\}$ of $\BBC[V]^{G}$ so that
$\{\rho_X(f_1), \dots, \rho_X(f_a)\}$ is a set of homogeneous generators of
$\BBC[X]^{C_X}$. It follows that $\exp(C_X) \subseteq \exp(G)$. Assuming the
validity of \autoref{thm:ref}, this completes the proof that if $\rho_X$ is
surjective, then $\CA(C_X) = \CA(G)^X$ and $\exp(C_X) \subseteq \exp(G)$.

\section{The infinite family of irreducible unitary reflection
  groups} \label{sec:grpn}

In this section we suppose that $G$ is in the infinite family of
irreducible unitary reflection groups and we classify the subspaces $X\in
L(\CA(G))$ such that $\CA(C_X) = \CA(G)^X$, we classify the subspaces $X\in
L(\CA(G))$ such that $C_X$ acts on $X$ as a reflection group, and we
complete the proof of~\autoref{thm:main} for these groups.

For a positive integer $k$, let $\mu_k$ denote the group of $k\th$ roots of
unity in $\BBC$ and set $[k]=\{1,2, \dots, k\}$. Throughout this section $r$
is a positive integer, $p$ is a positive divisor of $r$, and $q=r/p$. 

The presentation in the next two subsections is a reformulation of the
constructions and results in ~\cite[\S6.4]{orlikterao:arrangements} and
\cite[\S3]{taylor:reflection}. Proofs of the assertions are straightforward
and are omitted.

\subsection*{The groups \texorpdfstring{$\mathbf{ G (r,p,n)}$}{Grpn} and the
  arrangements \texorpdfstring{$\mathbf{ \CA_{n}^k}$} {CAn}}

Let $V = \BBC^n$. Using the standard basis $\{e_1, \dots, e_n\}$ of $\BBC^n$
we identify $\GL(V)$ with the matrix group $\GL_n(\BBC)$. For $1\leq i\leq
n$, let $x_i\colon \BBC^n\to \BBC$ be projection on the $i\th$
coordinate. For $\sigma$ in the symmetric group $S_n$, let $p_\sigma$ be the
permutation matrix defined by $p_\sigma e_i=e_{\sigma(i)}$ for $1\leq i\leq
n$. Let $W$ denote the set of permutation matrices in $\GL_n(\BBC)$ and let
$D$ denote the group of diagonal matrices in $\GL(V)$ with entries in
$\mu_r$. For $t\in D$ and $1\leq i\leq n$, let $t_i$ denote the $(i,i)$
entry of $t$. Similarly, for $v\in V$, $v_i$ denotes the $i\th$ coordinate
of $v$.

Define 
\[
\omega= e^{2\pi \sqrt {-1}/r}.
\]
Then $\omega$ is a generator of $\mu_r$ and $\omega^p$ is a generator of
$\mu_q$. Note that the determinant map, $\det\colon D\to \mu_r$, is a group
homomorphism. Define $D_p$ to be the preimage of the subgroup $\mu_q$. Then
\[ 
D_p = \{\,t \in D \mid \det t \in \mu_{q}\,\}
\]
and $W$ normalizes $D_p$. Set
\[
G(r,p,n)=WD_p.
\]
Note that every element in $G(r,p,n)$ has a unique factorization of the form
$p_\sigma t$ where $\sigma\in S_n$ and $t\in D_p$. As $n$ and $r$ are fixed,
we denote $G(r,p,n)$ simply by $G_p$. Obviously,
\[
G_r \subseteq G_p \subseteq G_1.
\]

Suppose $1\leq i\ne j\leq n$ and $\zeta$ is in $\mu_r$. Define
$r_{ij}(\zeta)$ to be the $n\times n$ matrix whose $(k,l)$-entry is
\[
r_{ij}(\zeta)_{kl}=
\begin{cases}
  1&k=l,\ k\ne i,\ k\ne j\\
  \zeta&(k,l)=(i,j)\\
  \zeta\inverse &(k,l)=(j,i)\\
  0&\text{otherwise.}
\end{cases}
\]
It is easy to check that the characteristic polynomial of $r_{ij}(\zeta)$ is
$(x-1)^{n-1} (x+1)$. It follows that $r_{ij}(\zeta)$ is a reflection with
order two. Define
\[
H_{ij}(\zeta)=\ker (x_i-\zeta x_j)= \{\, v\in V\mid v_i= \zeta v_j\,\}.
\]
It is easy to see that $H_{ij}(\zeta)$ is the reflecting hyperplane of
$r_{ij}(\zeta)$.

Suppose $1\leq i\leq n$ and $\zeta$ is in $\mu_r$ with $\zeta \ne 1$. Define
$r_{ii}(\zeta)$ to be the diagonal $n\times n$ matrix whose $(k,k)$-entry is
\[
r_{ii}(\zeta)_{kk}=
\begin{cases}
  \zeta&k=i\\
  1&\text{otherwise.}
\end{cases}
\]
Then $r_{ii}(\zeta)$ is obviously a reflection. Define
\[
H_{i}=\ker x_i= \{\, v\in V \mid v_i=0\,\}.
\]
It is clear that $H_{i}$ is the reflecting hyperplane of $r_{ii}(\zeta)$ for
all $\zeta$ in $\mu_r$ with $\zeta\ne1$.

Obviously, $r_{ij}(\zeta)$ is in $G_p$ for all $\zeta\in \mu_r$ and
$r_{ii}(\zeta)$ is in $G_p$ if and only if $\zeta\in \mu_q$. It can be shown
that these are all the reflections in $G_p$. Note that $G_r$ contains no
reflections of the form $r_{ii}(\zeta)$. When $p<r$, the reflections in
$G_p$ and their reflecting hyperplanes may be arranged in the arrays
\begin{equation}
  \label{eq:refp}  \scriptsize
  \begin{matrix}
    r_{11}(\zeta_1)& r_{12}(\zeta_{12}) & r_{13}(\zeta_{13}) & \dots &
    r_{1n}(\zeta_{1n})\\
    & r_{22}(\zeta_{2}) & r_{23}(\zeta_{23}) & \dots & r_{2n}(\zeta_{2n})\\
    && r_{33}(\zeta_{3}) & \dots & r_{3n}(\zeta_{3n})\\
    &&&\ddots&\vdots\\
    &&&& r_{nn}(\zeta_{n})
  \end{matrix} {\text{\normalsize\ \  and} }
  \begin{matrix}
    H_1& H_{12}(\zeta_{12}) & H_{13}(\zeta_{13}) & \dots &
    H_{1n}(\zeta_{1n})\\
    & H_{2} & H_{23}(\zeta_{23}) & \dots & H_{2n}(\zeta_{2n})\\
    && H_{3} & \dots & H_{3n}(\zeta_{3n})\\
    &&&\ddots&\vdots\\
    &&&& H_{n}
  \end{matrix}
\end{equation}
where $\zeta_{ij}\in \mu_{r}$ for $1\leq i<j\leq n$ and $\zeta_i\in \mu_q
\setminus \{1\}$ for $1\leq i\leq n$. Similarly, when $p=r$, the reflections
in $G_r$ and their reflecting hyperplanes are
\[
\{\, r_{ij}(\zeta_{ij}) \mid 1\leq i<j\leq n,\ \zeta_{ij}\in \mu_r\,\}
\quad \text{and} \quad
\{\, H_{ij}(\zeta_{ij}) \mid 1\leq i<j\leq n,\ \zeta_{ij}\in \mu_r\,\},
\]
respectively.

Define
\[
\CA_{n}^0= \{\, H_{ij}(\zeta)\mid 1\leq i<j\leq n,\ \zeta\in \mu_r\,\}
\]
and for $1\leq k\leq n$ define
\[
\CA_{n}^k= \{\, H_i\mid 1\leq i\leq k\,\} \amalg \CA_{n}^0.
\]
It is proved in~\cite[\S6.4]{orlikterao:arrangements} that
\[
\CA(G_p) = \CA_{n}^n \quad \text{for $p<r$, and} \quad \CA(G_r)= \CA_{n}^0.
\]

\subsection*{The lattice of the arrangement of \texorpdfstring {$\mathbf{
      G_p}$} {G(r,p,n)} and orbit representatives} \label{subsec:latticerpn}

To simplify formulas later, we extend the definition of $H_{ij}(\zeta)$ to
include all pairs $(i,j)$ with $1\leq i, j\leq n$ by defining
\[
H_{ii}(\zeta)= V \quad\text{for $1\leq i\leq n$.}
\]

Consider a pair $L=(L_0, \{L_1, \dots, L_a\})$ where $\{L_0, L_1, \dots,
L_a\}$ is a collection of disjoint subsets of $[n]$ such that $L_i\ne
\emptyset$ for $1\leq i\leq a$ and $\bigcup_{i=0}^a L_i=[n]$. Notice that
$L_0$ is allowed to be empty and that $\{L_1, \dots, L_a\}$ is a partition
of $[n]\setminus L_0$. Define
\[
X_{L}= \Big(\bigcap_{i\in L_0} H_i\Big) \cap \Big( \bigcap_{j=1}^a
\bigcap_{k,l\in L_j} H_{k,l}(1) \Big).
\]
If $v\in \BBC^n$, then $v$ is in $X_{L}$ if and only if
\begin{enumerate}
\item $v_i=0$ for all $i\in L_0$ and
\item $ v_k= v_l$ for all $k,l$ such that $k,l\in N_j$ for some $1\leq j\leq
  a$.
\end{enumerate}
For $1\leq i\leq a$, define $b_i=\sum_{k\in L_i} e_k$ and let $\CB_L=\{\,
b_i\mid 1\leq i\leq a\,\}$. Then $\CB_L$ is a basis of $X_L$.

Suppose $t\in D$. If $v$ is in $X_L$, then $(tv)_i =t_iv_i=0$ for $i\in L_0$
and if $k,l\in L_j$ for some $j \ge 1$, then $t_k\inverse (tv)_k = v_k =v_l
=t_l\inverse (tv)_l$. Conversely, if $v\in V$ is such that $v_i=0$ for all
$i\in L_0$ and $t_k\inverse v_k=t_l\inverse v_l$ for all $k,l$ such that
$k,l\in L_j$ for some $1\leq j\leq a$, then $v\in t(X_L)$. Thus,
\[
t(X_L)= \Big(\bigcap_{i\in L_0} H_i\Big) \cap \Big( \bigcap_{j=1}^a
\bigcap_{k,l\in L_j} H_{k,l}(t_k/ t_l) \Big).
\]

Let $\CL$ denote the set of all pairs $L=(L_0, \{L_1, \dots, L_a\})$ as
above. The next theorem is a restatement of results of Orlik, Solomon, and
Terao~\cite[\S6.4]{orlikterao:arrangements}.

\begin{theorem}\label{thm:lat}
  The lattice of the arrangement $\CA(G_p)$ is as follows. If $p<r$, then
  \[
  L(\CA(G_p)) =L(\CA_{n}^n)= \{\, t(X_{L}) \mid L\in \CL, \ t\in D\,\}.
  \]
  For $p=r$,
  \[
  L(\CA(G_r))= L(\CA_{n}^0)= \{\, t(X_{L}) \mid L \in \CL,\ |L_0|\ne 1,\
  t\in D\,\}.
  \]
\end{theorem}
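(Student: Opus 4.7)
The plan is to verify both inclusions separately, using \autoref{lem:ptH} as the principal technical tool. The $\supseteq$ direction amounts to checking that each described subspace can be written as an intersection of hyperplanes in $\CA(G_p)$, while the $\subseteq$ direction amounts to extracting the combinatorial data $(N,t)$ from an arbitrary element of the lattice.

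For the inclusion $\supseteq$, observe that by construction
\[
X_N = \Big(\bigcap_{i\in N_0} H_i\Big) \cap \Big(\bigcap_{j=1}^a \bigcap_{\substack{k,l\in N_j\\ k<l}} H_{k,l}(1)\Big).
\]
When $p<r$, all hyperplanes on the right lie in $\CA_n^n=\CA(G_p)$, so $X_N\in L(\CA(G_p))$ immediately. When $p=r$ the hyperplanes $H_i$ are not in $\CA_n^0$, but provided $|N_0|\ne 1$ one can rewrite $\bigcap_{i\in N_0}H_i$ via \autoref{lem:ptH}\autoref{it:ptH4}: pairing distinct $i,j\in N_0$, the identity $H_{ij}(1)\cap H_{ij}(\omega)=H_i\cap H_j$ realizes the desired intersection using only hyperplanes from $\CA_n^0$. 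The condition $|N_0|\ne 1$ is genuinely needed, since a single $H_i$ is contained in no hyperplane of $\CA_n^0$ and so cannot arise as a nonempty intersection of such. Finally, since $t\in D$ permutes $\CA(G_p)$ by \autoref{lem:ptH}\autoref{it:ptH1} and \autoref{lem:ptH}\autoref{it:ptH2}, each translate $t(X_N)$ is again an intersection of hyperplanes from $\CA(G_p)$.

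For the inclusion $\subseteq$, start with an arbitrary intersection $Y=H_{k_1}\cap\cdots\cap H_{k_m}$ of hyperplanes from $\CA(G_p)$ and encode it as a labeled graph on vertex set $[n]$: each occurrence of $H_{ij}(\zeta)$ contributes an edge between $i$ and $j$ labeled $\zeta$, and one starts with $S=\{\, i\mid H_i\text{ appears in the intersection}\,\}$, which is empty when $p=r$. Enlarge $S$ by the closure rules: (i) if two edges between the same pair carry distinct labels then both endpoints enter $S$, by \autoref{lem:ptH}\autoref{it:ptH4}; (ii) if $i\in S$ and some edge is incident at $i$ then the other endpoint enters $S$; and (iii) if a closed walk has label-product different from $1$ then every vertex on the walk enters $S$. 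After stabilization, set $N_0=S$ and let $N_1,\dots,N_a$ be the connected components of the edge-graph restricted to $[n]\setminus N_0$, counting isolated vertices as singletons. Construct $t\in D$ by choosing a root $k_j\in N_j$ with $t_{k_j}=1$ and propagating $t_l=\zeta\,t_k$ along each edge $(k,l)$ labeled $\zeta$ inside $N_j$; consistency is automatic because all inconsistent closed walks have been absorbed into $N_0$. A direct computation using \autoref{lem:ptH}\autoref{it:ptH2} then yields $Y=t(X_N)$.

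The main obstacle I foresee is the bookkeeping in the closure process, together with the check that $|N_0|\ne 1$ when $p=r$. For this last point the key observation is that $\CA_n^0$ contains no single-coordinate hyperplane, so a vertex can enter $N_0$ only via rules (i)--(iii), each of which brings in at least two vertices when $p=r$ (where $S$ starts empty); hence $|N_0|$ is always $0$ or at least $2$. Combining the two inclusions yields the stated descriptions of $L(\CA(G_p))$.
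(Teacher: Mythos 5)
Your proof is correct, but it is worth noting that the paper does not actually prove this theorem at all: it is stated as ``a restatement of results of Orlik, Solomon, and Terao \cite[\S6.4]{orlikterao:arrangements}'' and the proof is delegated entirely to that reference. What you have written is a self-contained reconstruction of the standard argument: the $\supseteq$ inclusion via \autoref{lem:ptH}, with the identity $H_{ij}(1)\cap H_{ij}(\omega)=H_i\cap H_j$ recovering the coordinate hyperplanes inside $\CA_n^0$ when $|N_0|\ge 2$, and the $\subseteq$ inclusion via the labeled-graph closure process, whose three rules are exactly the linear-algebra implications among equations of the form $v_i=0$ and $v_i=\zeta v_j$. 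Your parity argument for why $|N_0|\ne 1$ when $p=r$ (each closure rule injects at least two vertices into $S$, which starts empty) is the right explanation of that condition. Two small points: your use of $\omega\ne 1$ makes visible the implicit hypothesis $r\ge 2$, which is genuinely needed for the $p=r$ statement (for $r=1$ the set on the right-hand side strictly contains $L(\CA_n^0)$), and in the propagation step the convention should be $t_k=\zeta t_l$ for an edge recording $H_{kl}(\zeta)$, so that \autoref{lem:ptH} gives $t_k/t_l=\zeta$; this is pure bookkeeping and does not affect the argument. What your approach buys is a proof the paper omits; what the paper's citation buys is brevity and consistency with the notational conventions of \cite[\S6.4]{orlikterao:arrangements}, where the same combinatorial description is established.
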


The group $G_p$ acts on $L(\CA_{n}^n)$ and $L(\CA_{n}^0)$ is a $G_p$-stable
subset. In the rest of this subsection we describe a subset of $L(\CA_{n}^n)$
that contains at least one representative from each $G_p$-orbit. The main
result is \autoref{cor:or}. Taylor \cite[Theorems 3.9 and
3.11]{taylor:reflection} also describes a set of orbit representatives.

First, the subspaces $t(X_L)$ in \autoref{thm:lat} are not all distinct. For
$L\in \CL$ define $D_L$ to be the setwise stabilizer of $X_L$ in $D$:
\[
D_L=\{\, t\in D\mid t(X_L)=X_L\,\}.
\]
Obviously, $t(X_L)=X_L$ if and only if $tb_i\in X_L$ for $1\leq i\leq a$ and
so 
\[
D_L=\{\, t\in D\mid \forall\, 1\leq j\leq a,\ \forall\, k,l\in L_j,\
t_k=t_l\,\}.
\]
If $L'$ is another pair, then it is easy to see that $X_L=X_{L'}$ if and
only if $L=L'$ and that for $t,t'\in D$,
\[
t(X_{L})= t'(X_{L'})\quad\text{if and only if} \quad L=L'\text{\ and\ }
tD_L= t'D_{L}.
\]

\begin{lemma}\label{lem:conj}
  Suppose $L=(L_0, \{L_1, \dots, L_a\})$ and $\sigma\in S_n$. Then
  \[
  p_\sigma( X_L)= X_{\sigma(L)},
  \]
  where $\sigma(L)= (\sigma(L_0), \{\sigma(L_1), \dots, \sigma(L_a)\} )$.
\end{lemma}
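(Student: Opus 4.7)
The plan is to give a direct verification, most naturally by taking preimages of defining linear equations. Both characterizations of $X_N$ are available: the intersection formula
\[
X_{N}= \Big(\bigcap_{i\in N_0} H_i\Big) \cap \Big( \bigcap_{j=1}^a \bigcap_{k,l\in N_j} H_{k,l}(1) \Big)
\]
and the pointwise description ($v\in X_N$ iff $v_i=0$ for $i\in N_0$ and $v_k=v_l$ whenever $k,l$ lie in a common $N_j$, $j\ge 1$). I would use whichever is cleaner; the intersection form is probably the smoothest, since \autoref{lem:ptH}\autoref{it:ptH1}--\autoref{it:ptH2} already tell us how $p_\sigma$ moves the relevant hyperplanes.

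First I would note that $\sigma(N)$ is actually an element of $\CN$: since $\sigma$ is a bijection of $[n]$, the family $\{\sigma(N_0), \sigma(N_1),\dots, \sigma(N_a)\}$ is still a collection of pairwise disjoint subsets of $[n]$ whose union is $[n]$, and $\sigma(N_j)$ is nonempty for $1\le j\le a$, so $X_{\sigma(N)}$ is well defined. Then, applying $p_\sigma$ (taking $t=\id$ in \autoref{lem:ptH}) to the intersection formula gives
\[
p_\sigma(X_N)=\Big(\bigcap_{i\in N_0} p_\sigma(H_i)\Big) \cap \Big(\bigcap_{j=1}^a \bigcap_{k,l\in N_j} p_\sigma(H_{k,l}(1))\Big) = \Big(\bigcap_{i\in N_0} H_{\sigma(i)}\Big)\cap \Big(\bigcap_{j=1}^a \bigcap_{k,l\in N_j} H_{\sigma(k),\sigma(l)}(1)\Big),
\]
where the second equality uses that $1\cdot t_i t_j\inverse=1$ when $t=\id$. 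A reindex of the intersection via $i'=\sigma(i)$, $k'=\sigma(k)$, $l'=\sigma(l)$ gives exactly $X_{\sigma(N)}$.

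There is essentially no obstacle: the entire argument is bookkeeping once \autoref{lem:ptH} is in hand. The only place to be slightly careful is to confirm that $\sigma$ genuinely moves the index sets as asserted (a bijection, not just a map) and that the transformation of labels $H_{ij}(1)\mapsto H_{\sigma(i),\sigma(j)}(1)$ preserves the chosen eigenvalue $\zeta=1$, so we land in $X_{\sigma(N)}$ rather than some translate $t(X_{\sigma(N)})$. If for any reason the intersection formulation is awkward (for instance because one wants to treat the case $k=l$ uniformly via the convention $H_{ii}(\zeta)=V$), I would instead write the pointwise version: $v\in p_\sigma(X_N)$ iff $p_\sigma\inverse v\in X_N$, use $(p_\sigma\inverse v)_i=v_{\sigma(i)}$, and translate the coordinate conditions defining $X_N$ directly into the coordinate conditions defining $X_{\sigma(N)}$.
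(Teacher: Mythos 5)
Your proposal is correct and follows exactly the paper's route: the paper's proof consists of the single line that the claim follows from \autoref{lem:ptH}\autoref{it:ptH1} and \autoref{lem:ptH}\autoref{it:ptH2}, which is precisely the computation you carry out (applying $p_\sigma$ with $t=\id$ to the intersection formula and reindexing). Your additional checks --- that $\sigma(N)\in\CN$ and that the eigenvalue $\zeta=1$ is preserved since $t_it_j\inverse=1$ --- are the details the paper leaves implicit.
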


Now suppose $k$ is an integer with $0\leq k\leq n$ and $\lambda=(\ell_1,
\dots, \ell_a)$ is a partition of $k$. Set $\bar \ell_0=0$ and for $i>0$ let
$\bar \ell_i=\ell_1+ \dots + \ell_i$ denote the $i\th$ partial sum of
$\lambda$. Define a pair $L_\lambda = ((L_\lambda)_0, \{(L_\lambda)_1,
\dots, (L_\lambda)_a\})$ in $\CL$ by
\[
(L_\lambda)_0= \{k+1, \dots, n\}\quad \text{and} \quad (L_\lambda)_i= \{
\bar \ell_{i-1}+1, \bar \ell_{i-1}+2, \dots, \bar \ell_{i} \} \text{ for }
1\leq i\leq a.
\]
Also, define
\[
\delta_\lambda= \gcd(\ell_1, \dots, \ell_a) \quad\text{and}\quad
\delta_{\lambda,p} = \gcd(p, \ell_1, \dots, \ell_a),
\]
and set
\[
X_{\lambda}= X_{L_\lambda}, \quad \CB_\lambda=\CB_{L_\lambda}, \quad
\text{and} \quad D_\lambda= D_{L_\lambda}.
\]
Note that $\CB_\lambda=\{\, b_1, \dots, b_a\,\}$ where $b_i= e_{\bar
  \ell_{i-1}+1} + \dots +e_{\bar \ell_i}$.

\begin{lemma}
  If $X\in L(\CA_{n}^n)$, then there is an integer $k$ with $0\leq k\leq n$,
  a partition $\lambda$ of $k$, and a permutation $\sigma\in S_n$ so that
  $p_\sigma(X)$ is in the $D$-orbit of $X_\lambda$.
\end{lemma}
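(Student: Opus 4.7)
The plan is to invoke the preceding classification theorem to write $X=t(X_N)$ for some $N=(N_0,\{N_1,\dots,N_a\})\in\CN$ and $t\in D$, and then to use a permutation to move the blocks $N_0,N_1,\dots,N_a$ onto the standard blocks associated to a partition.

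Concretely, I would first set $k=n-|N_0|$ and let $n_i=|N_i|$ for $1\le i\le a$. After relabeling the indices $1,\dots,a$, I may assume $n_1\ge n_2\ge\dotsb\ge n_a$, so that $\lambda=(n_1,\dots,n_a)$ is a genuine partition of $k$. Using the partial sums $\bar n_i$ as in the definition of $N_\lambda$, I would choose any $\sigma\in S_n$ with $\sigma(N_0)=\{k+1,\dots,n\}$ and $\sigma(N_i)=\{\bar n_{i-1}+1,\dots,\bar n_i\}=(N_\lambda)_i$ for $1\le i\le a$. Such a $\sigma$ exists because $\{N_0,N_1,\dots,N_a\}$ and $\{(N_\lambda)_0,(N_\lambda)_1,\dots,(N_\lambda)_a\}$ are both partitions of $[n]$ with the prescribed block sizes. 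By \autoref{lem:conj}, $p_\sigma(X_N)=X_{\sigma(N)}=X_{N_\lambda}=X_\lambda$.

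Next I would transfer this to $X=t(X_N)$ itself. Since $D$ is the full group of diagonal matrices with entries in $\mu_r$, it is normalized by $W$: for any $\sigma\in S_n$ and any $t\in D$, the conjugate $t':=p_\sigma t p_\sigma^{-1}$ is again in $D$ (its diagonal entries are just a permutation of those of $t$). Therefore
\[
p_\sigma(X)\;=\;p_\sigma t(X_N)\;=\;(p_\sigma t p_\sigma^{-1})p_\sigma(X_N)\;=\;t'(X_\lambda),
\]
which exhibits $p_\sigma(X)$ as an element of the $D$-orbit of $X_\lambda$, as required.

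There is no real obstacle here; the argument is purely bookkeeping. The only small point to be careful about is that $\lambda$ must be a partition (weakly decreasing), which is why one reorders the blocks $N_1,\dots,N_a$ before defining $\sigma$, and that one must absorb the permutation into $t$ via the conjugation $t\mapsto p_\sigma t p_\sigma^{-1}$, which is legitimate precisely because $W$ normalizes $D$.
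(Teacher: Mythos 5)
Your argument is correct and is essentially identical to the paper's proof: both write $X=t(X_N)$ via the classification of $L(\CA_n^n)$, choose $\sigma$ with $\sigma(N)=N_\lambda$, and compute $p_\sigma(t(X_N))=(p_\sigma t p_\sigma\inverse)\,p_\sigma(X_N)=(p_\sigma t p_\sigma\inverse)X_\lambda$ using \autoref{lem:conj} and the fact that $W$ normalizes $D$. Your additional remarks about reordering the blocks to make $\lambda$ weakly decreasing are correct bookkeeping that the paper leaves implicit.
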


It follows from the lemma that to find representatives of the orbits of the
action of $G_p$ on $L(\CA_{n}^n)$, it is enough to decompose the $D$-orbit
of $X_\lambda$, $DX_\lambda$, into $D_p$-orbits when $\lambda$ is a
partition of $k$ and $0\leq k\leq n$. Now $D_\lambda$ is the stabilizer of
$X_\lambda$ in $D$ and so the $D_p$-orbits in $DX_\lambda$ are in bijection
with $D_p, D_\lambda$-double cosets in $D$. Because $D$ is abelian, these
double cosets are simply the cosets of $D_pD_\lambda$ in $D$.

\begin{proposition} 
  \label{lem:crs}
  Suppose $\lambda=(n_1, \dots, n_a)$ is a partition of $k$ with $0\leq
  k\leq n$.
  \begin{enumerate}
  \item If $k<n$, then $D_pD_\lambda=D$.
  \item If $k=n$, then $|D:D_pD_\lambda| = \delta_{\lambda, p}$. Moreover, if
    for $0\leq u<\delta_{\lambda, p}$, $d_u$ is an element in $D$ with $\det
    d_u =\omega^u$, then $\{\, d_u\mid 0\leq u<\delta_{\lambda, p} \,\}$ is
    a cross-section of $D_pD_\lambda$ in $D$.
  \end{enumerate}
\end{proposition}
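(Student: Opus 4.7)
\medskip

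The plan is to work entirely with the determinant map $\det\colon D\to \mu_r$. Since $D_p= \det\inverse(\mu_q)$ is the kernel of the composite surjection $D\xrightarrow{\det} \mu_r\twoheadrightarrow \mu_r/\mu_q$, for any subgroup $H\leq D$ we have
\[
|D:D_pH| \;=\; [\mu_r : \mu_q\cdot\det(H)].
\]
So the entire proposition reduces to computing $\det(D_\lambda)$ and then reading off both the index and a cross-section.

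First I would compute $\det(D_\lambda)$. A diagonal matrix $t\in D$ lies in $D_\lambda$ precisely when its diagonal entries are constant on each block $(N_\lambda)_i$, $1\leq i\leq a$, while the entries indexed by $(N_\lambda)_0$ are unconstrained. Writing $t_j=\zeta_i$ for $j\in(N_\lambda)_i$ and denoting the remaining entries by $\eta_j$ ($j\in (N_\lambda)_0$), one gets $\det t = \zeta_1^{n_1}\cdots\zeta_a^{n_a}\prod_{j\in (N_\lambda)_0}\eta_j$.

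In case (1), where $k<n$, the set $(N_\lambda)_0$ is nonempty, so letting a single $\eta_j$ range over $\mu_r$ (and setting all other parameters to $1$) shows $\det(D_\lambda)=\mu_r$. Then $\mu_q\cdot\det(D_\lambda)=\mu_r$, the index is $1$, and $D=D_pD_\lambda$. In case (2), where $k=n$, there are no $\eta_j$'s, and $\det(D_\lambda)$ is the subgroup of $\mu_r$ generated by $\omega^{n_1},\ldots,\omega^{n_a}$, hence by $\omega^{\gcd(n_1,\ldots,n_a)}$. Adjoining $\mu_q=\langle\omega^{p}\rangle$ gives
\[
\mu_q\cdot\det(D_\lambda) \;=\; \langle \omega^{p},\omega^{n_1},\ldots,\omega^{n_a}\rangle \;=\; \langle\omega^{\gcd(p,n_1,\ldots,n_a)}\rangle \;=\; \langle\omega^{\delta_{\lambda,p}}\rangle.
\]
Since $\delta_{\lambda,p}\mid p\mid r$, this subgroup has order $r/\delta_{\lambda,p}$ in $\mu_r$, so $|D:D_pD_\lambda|=\delta_{\lambda,p}$, as required.

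For the cross-section, observe that two elements $d,d'\in D$ represent the same coset of $D_pD_\lambda$ if and only if $\det(d)\det(d')^{-1}\in \mu_q\cdot\det(D_\lambda) = \langle\omega^{\delta_{\lambda,p}}\rangle$, i.e., if and only if $\det d \equiv \det d' \pmod{\omega^{\delta_{\lambda,p}}}$. Hence the elements $d_0,d_1,\ldots,d_{\delta_{\lambda,p}-1}$ with $\det d_u=\omega^u$ lie in pairwise distinct cosets, and by the index computation they exhaust all cosets.

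Nothing here is really an obstacle; the only point that needs a moment of care is the identification $\langle\omega^{p},\omega^{n_1},\ldots,\omega^{n_a}\rangle = \langle\omega^{\delta_{\lambda,p}}\rangle$, which is the standard fact that in a cyclic group of order $r$, the subgroup generated by powers $\omega^{m_1},\ldots,\omega^{m_s}$ coincides with $\langle\omega^{\gcd(m_1,\ldots,m_s,r)}\rangle$; the condition $p\mid r$ makes $\gcd(p,n_1,\ldots,n_a,r)=\gcd(p,n_1,\ldots,n_a)=\delta_{\lambda,p}$, yielding the stated index.
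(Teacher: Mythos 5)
Your proof is correct and rests on the same engine as the paper's: both arguments funnel everything through the determinant map and the computation of $\det(D_\lambda)$ ($=\mu_r$ when $k<n$, $=\langle\omega^{\delta_\lambda}\rangle$ when $k=n$), and both identify the cosets of $D_pD_\lambda$ by the value of $\det$ modulo $\langle\omega^{\delta_{\lambda,p}}\rangle$. The only difference is organizational: you pass directly to the quotient $\mu_r/\mu_q$ via the identity $|D:D_pH|=[\mu_r:\mu_q\cdot\det(H)]$, whereas the paper computes $|D_p\cap D_\lambda|$ through the kernel of $\det|_{D_\lambda}$ and applies the product formula $|D:D_pD_\lambda|=|D|\,|D_p\cap D_\lambda|/(|D_p|\,|D_\lambda|)$ --- your packaging is slightly leaner but the arithmetic is identical.
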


We now specify the elements $d_u$ in the preceding lemma, and hence
$G_p$-orbit representatives in $L(\CA_{n}^n)$, as follows. Let $t_0$ be the
diagonal matrix with
\[
(t_0)_i= \begin{cases} \omega & i=1 \\ 1 & i>1 .\end{cases} 
\]
To simplify the notation, set
\[
X_{\lambda,u}= t_0^u(X_\lambda).
\]
Note that $X_{\lambda,0} =X_\lambda$.

\begin{corollary}\label{cor:or}
  Suppose $X\in L(\CA_{n}^n)$. Then there is an integer $k$ with $0\leq
  k\leq n$ and a partition $\lambda$ of $k$ such that $X$ is in the
  $G_1$-orbit of $X_\lambda$.

  If $k<n$, then $X$ is in the $G_p$-orbit of $X_\lambda$.  

  If $k=n$, then there is an integer $u$ with $0\leq u<\delta_{\lambda,p}$
  so that $X$ is in the $G_p$-orbit of $X_{\lambda,u}$.
\end{corollary}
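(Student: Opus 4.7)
The plan is to assemble \autoref{cor:or} directly from the preceding lemma (the one giving $p_\sigma(X)\in DX_\lambda$) together with \autoref{lem:crs}, by carefully choosing the cross-section of $D_pD_\lambda$ in $D$ to be the explicit family $\{t_0^u\}$.

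First I would start with an arbitrary $X\in L(\CA_{n}^{n})$. The immediately preceding lemma produces a permutation $\sigma\in S_n$, an integer $k$ with $0\le k\le n$, and a partition $\lambda$ of $k$ such that $p_\sigma(X)\in DX_\lambda$. Since $D\subseteq G_1$ and $p_\sigma\in W\subseteq G_1$, this already places $X$ in the $G_1$-orbit of $X_\lambda$, which gives the first assertion of the corollary.

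Next I would handle $k<n$. Here \autoref{lem:crs}(1) gives $D_pD_\lambda=D$, and since $D_\lambda$ stabilizes $X_\lambda$ pointwise as a subspace, we have $DX_\lambda = D_pD_\lambda X_\lambda = D_pX_\lambda$. Combined with $p_\sigma\in W\subseteq G_p$, we obtain $X\in G_pX_\lambda$, as required.

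The main (and only slightly delicate) step is the case $k=n$. I would verify first that $t_0^u$ is a valid choice for $d_u$ in \autoref{lem:crs}(2): since $\det(t_0)=\omega$, we have $\det(t_0^u)=\omega^u$, so $\{t_0^u\mid 0\le u<\delta_{\lambda,p}\}$ is a cross-section of $D_pD_\lambda$ in $D$. Consequently
\[
DX_\lambda \;=\; \bigsqcup_{u=0}^{\delta_{\lambda,p}-1} t_0^u\,D_pD_\lambda\,X_\lambda \;=\; \bigsqcup_{u=0}^{\delta_{\lambda,p}-1} t_0^u\,D_p\,X_\lambda.
\]
Because $D$ is abelian, $t_0^u D_p = D_p t_0^u$, so $t_0^u D_p X_\lambda = D_p(t_0^u X_\lambda) = D_p X_{\lambda,u}$. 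Therefore $DX_\lambda = \bigsqcup_u D_p X_{\lambda,u}$, and $p_\sigma(X)\in DX_\lambda$ lies in exactly one of these pieces, say $D_p X_{\lambda,u}$. Since $p_\sigma\in G_p$ and $D_p\subseteq G_p$, we conclude that $X$ lies in the $G_p$-orbit of $X_{\lambda,u}$, completing the proof. The only subtle point is keeping track of the fact that $D_\lambda$ fixes $X_\lambda$ as a set so that the double-coset decomposition from \autoref{lem:crs} translates cleanly into an orbit decomposition of $DX_\lambda$ under $D_p$; everything else is bookkeeping.
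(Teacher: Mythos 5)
Your argument is correct and is precisely the one the paper intends: the corollary is stated there without a separate proof, but the surrounding discussion reduces it to decomposing $DX_\lambda$ into $D_p$-orbits via the cosets of $D_pD_\lambda$ (using that $D$ is abelian and $D_\lambda$ is the stabilizer of $X_\lambda$) and then choosing the coset representatives $d_u=t_0^u$, exactly as you do. One small terminological slip: $D_\lambda$ is the \emph{setwise} (not pointwise) stabilizer of $X_\lambda$ in $D$, which is all your argument needs and is what you in fact invoke at the end.
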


Note that $t_0$ normalizes $G_p$. Thus, the next corollary follows
from~\autoref{pro:redorb} and~\autoref{cor:or}.

\begin{corollary}\label{cor:redor}
  Suppose $X\in L(\CA(G_p))$ and $X$ is in the $G_p$-orbit of
  $X_{\lambda,u}$. Then the conclusion of~\autoref{thm:main} holds for $X$
  if and only if it holds for $X_\lambda$.
\end{corollary}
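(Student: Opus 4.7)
My plan is to exploit \autoref{pro:redorb} twice, once to reduce $X$ to a distinguished orbit representative and once to strip away the twist by $t_0^u$. First, since $X$ and $X_{\lambda,u}$ lie in the same $G_p$-orbit, there is a $g\in G_p$ with $X=g(X_{\lambda,u})$. Every element of $G_p$ trivially normalizes $G_p$, so \autoref{pro:redorb} (taking $h=g$) tells us that the conclusion of \autoref{thm:main} holds for $X$ if and only if it holds for $X_{\lambda,u}$.

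Second, by definition $X_{\lambda,u}=t_0^u(X_\lambda)$, so to complete the reduction it suffices to verify that $t_0$ normalizes $G_p$; then $t_0^u$ does as well, and a second application of \autoref{pro:redorb} with $h=t_0^u$ shows that the conclusion holds for $X_{\lambda,u}$ if and only if it holds for $X_\lambda$. Chaining the two equivalences gives the corollary.

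The only real content is the normalization claim, which will be a brief computation using the factorization $G_p=WD_p$. Since $D$ is abelian, conjugation by $t_0$ fixes $D_p$ pointwise. For the permutation part, I would write
\[
t_0\, p_\sigma\, t_0^{-1}= \bigl(t_0\,(p_\sigma t_0^{-1}p_\sigma^{-1})\bigr)\, p_\sigma,
\]
observe via \autoref{lem:ptH}\autoref{it:ptH2} (or a direct calculation) that the parenthesized factor $d_\sigma:=t_0(p_\sigma t_0^{-1}p_\sigma^{-1})$ is diagonal, and compute
\[
\det d_\sigma= \det(t_0)\cdot\det(p_\sigma t_0^{-1}p_\sigma^{-1})= \omega\cdot\omega^{-1}=1\in \mu_q,
\]
so $d_\sigma\in D_p$. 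Hence $t_0\,p_\sigma\,t_0^{-1}\in D_p W = W D_p = G_p$, which together with the previous sentence gives $t_0 G_p t_0^{-1}\subseteq G_p$; equality follows on cardinality grounds (or by the same argument applied to $t_0^{-1}$).

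The only conceivable obstacle is this normalization check, and it is completely routine; the substantive combinatorial work (the classification into the orbits represented by the $X_{\lambda,u}$) is already done in \autoref{cor:or}, and the functorial statement of \autoref{pro:redorb} does the rest of the work for us.
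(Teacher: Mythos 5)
Your proof is correct and follows the same route as the paper, which simply notes that $t_0$ normalizes $G_p$ and then invokes \autoref{pro:redorb}; you have merely filled in the (routine but worthwhile) verification of that normalization via the factorization $G_p=WD_p$ and the determinant computation. No issues.
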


\subsection*{The groups \texorpdfstring {$\mathbf{ C_{ X}^{\operatorname{
          \mathbf{ref}}}}$ } {Xljref}}

In this subsection we determine the groups $C_X^\r$ as reflection subgroups
of $\GL(X)$, we derive enough information about the groups $C_{X}$ to
characterize the subspaces $X\in L(\CA(G_p))$ such that $C_X$ acts on $X$ as
a reflection group, and we show that if $\exp(C_X)\subseteq \exp(G_p)$, then
$C_X$ acts on $X$ as a reflection group. 

The groups $N_X$, $Z_X$, and $C_X$ depend on the ambient group $G_p$. We
indicate this dependence with a superscript $p$, so
\[
N_X^p= \{\, g\in G_p\mid g(X) =X\,\}, \quad Z_X^p= \{\, g\in G_p\mid g|_X=
\id\,\}, \quad\text{and}\quad C_X^p= N_X^p/ C_X^p.
\]
Note that $N_X^p= G_p \cap N_X^1$ and $Z_X^p= G_p \cap Z_X^1$.

Suppose $0\leq k\leq n$ and $\lambda=(\ell_1, \dots, \ell_a)$ is a partition
of $n-k$.  Let $i_1, \dots, i_c$ be the distinct parts of $\lambda$ with
$i_1> \dots >i_c$ and multiplicities $m_1, \dots, m_c$, respectively, so
that $\lambda= (i_1^{m_1}, \dots, i_c^{m_c})$. Our first task is to describe
the groups $N_{X_\lambda}^p$, $Z_{X_\lambda}^p$, and $C_{X_\lambda}^p$ (see
\autoref{eq:n1}, \autoref{eq:z1}, and \autoref{eq:cxp}). To simplify the
notation, until the statement of \autoref{pro:cxref} set $L=L_\lambda$.

Define
\[
W_\lambda = \{\, p_\sigma\in W\mid \sigma(L_\lambda)= L_\lambda\,\}.
\]
Then $W_\lambda$ is the semidirect product of the subgroups $Z_\lambda$ and
$C_\lambda$, where
\[
Z_\lambda = \{\, p_\sigma\in W_\lambda \mid \forall\ 0\leq i\leq a,\ 
\sigma(L_i)= L_i\,\}
\]
is a normal subgroup of $W_\lambda$ and
\[
C_\lambda =\{\, p_\sigma\in W_\lambda \mid \text{$\sigma(L_0)=L_0$ and
  $\sigma|_{L_i}$ is increasing for $1\leq i\leq a$} \,\}.
\]
In addition, 
\[
Z_\lambda \cong S_{\ell_1} \times \dotsb \times S_{\ell_a} \times S_{n-k}
\quad\text{and} \quad C_\lambda \cong S_{m_1} \times \dotsb \times S_{m_c}.
\]

Clearly $N_{X_\lambda}^1 = W_\lambda D_\lambda$ and so
\begin{equation}
  \label{eq:n1}
  N_{X_\lambda}^p= G_p \cap W_\lambda D_\lambda= W_\lambda (G_p\cap D_\lambda)
  = W_\lambda (D_p\cap D_\lambda) = C_\lambda Z_\lambda (D_p\cap D_\lambda).   
\end{equation}

Next consider $Z_{X_\lambda}^1$. Obviously, $g\in G_1$ is in
$Z_{X_\lambda}^1$ if and only if $gb_i=b_i$ for $b_i\in \CB_\lambda$, and by
a theorem of Steinberg \cite[Theorem 1.5]{steinberg:differential},
$Z_{X_\lambda}^1$ is generated by the reflections it contains. Referring
to~\autoref{eq:refp}, it is easy to see that
\begin{enumerate}
\item for $1\leq l\leq n$ and $\zeta\in \mu_r\setminus \{1\}$,
  $r_{ll}(\zeta)$ is in $Z_{X_\lambda}^1$ if and only if $l\in L_0$,
\item for $1\leq l<l'\leq n$ and $\zeta\in \mu_r$, $r_{ll'}(\zeta)$ is in
  $Z_{X_\lambda}^1$ if and only if either
  \begin{enumerate}
  \item $l,l'\in L_0$, or
  \item there is an $i>0$ so that $l,l'\in L_i$ and $\zeta=1$.
  \end{enumerate}
\end{enumerate}
From this it is straightforward to check that if
\[
D_{\ell_0} =\{\, t\in D\mid \forall\ 1\leq i\leq k,\ t_i=1 \,\},
\]
then $Z_{X_\lambda}^1\cap D=D_{\ell_0}$ and
\[
Z_{X_\lambda}^1 = Z_\lambda D_{\ell_0} \cong S_{\ell_1} \times \dotsb \times
S_{\ell_a} \times G(r,1,n-k).
\]
Therefore
\begin{equation}
  \label{eq:z1}
  Z_{X_\lambda}^p= G_p \cap Z_{X_\lambda}^1 = Z_\lambda (D_p\cap D_{\ell_0}) \cong
  S_{\ell_1} \times \dotsb \times S_{\ell_a} \times G(r,p,n-k).  
\end{equation}

Finally, it follows from~\autoref{eq:n1},~\autoref{eq:z1}, and the
computation of $|D_p\cap D_\lambda|$ in the proof of~\autoref{lem:crs} that
\begin{equation}
  \label{eq:cxp}
  |C_{X_\lambda}^p| =
  \begin{cases}
    m_1!\dotsb m_c! r^a &\text{if  $k<n$}\\
    m_1!\dotsb m_c! \frac {r^a \delta_{\lambda,p}} p & \text{if  $k=n$.}\\
  \end{cases}
\end{equation}

\begin{proposition}\label{pro:cxref}  
  Suppose $X\in L(\CA(G_p))$ is in the $G_p$-orbit of $X_{\lambda,u}$, where
  $\lambda=(\ell_1, \dots, \ell_a)$ is a partition of $k$ for some $0\leq
  k\leq n$ and $u$ is an integer with $0\leq u<\delta_{\lambda,p}$. Suppose
  $i_1, \dots, i_c$ are the distinct parts of $\lambda$ with $i_1> \dots
  >i_c$ and multiplicities $m_1, \dots, m_c$, so $\lambda= (i_1^{m_1},
  \dots, i_c^{m_c})$. Then as a reflection subgroup of $\GL(X)$,
  \[
  C_{X}^\r \cong
  \begin{cases}
    G(r,1,m_1) \times \dotsb \times G(r,1,m_c) &\text{if $k<n$} \\
    G(r,p_{i_1},m_1) \times \dotsb \times G(r,p_{i_c}, m_c) &\text{if
      $k=n$,}
  \end{cases} 
  \]
  where for an integer $j$, $p_{j}=\lcm (j, p)/j$.
\end{proposition}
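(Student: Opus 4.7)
The plan is to reduce immediately to the case $X = X_\lambda$ using \autoref{pro:redorb} together with the observation (made just before \autoref{cor:redor}) that $t_0$ normalizes $G_p$: the $G_p$-orbit of $X$ contains $X_{\lambda,u} = t_0^u(X_\lambda)$, and conjugation by $t_0^u$ gives an isomorphism of reflection groups $C_{X_{\lambda,u}}^\r \cong C_{X_\lambda}^\r$. So we may assume $X = X_\lambda$ throughout.

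Next, I would directly describe $N_{X_\lambda}^p$ and $Z_{X_\lambda}^p$. Writing $g = p_\sigma t \in G_p$ with $t \in D_p$ and using \autoref{lem:ptH} to compute the action of $g$ on the basis $\CB_\lambda = \{b_j\}$ of $X_\lambda$, one finds: $g$ lies in $N_{X_\lambda}^p$ iff $\sigma$ preserves $N_0$ and permutes the blocks $\{N_1,\dots,N_a\}$ (inducing a size-preserving $\pi \in S_a$, i.e. $|N_{\pi(j)}| = |N_j|$) while $t$ is constant, say with value $\tau_j$, on each $N_j$ (with $t|_{N_0}$ free subject to $\det t \in \mu_q$); and $g$ lies in $Z_{X_\lambda}^p$ iff $\sigma$ fixes $[n]\setminus N_0$ pointwise and $t|_{[n]\setminus N_0} = 1$. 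Consequently, every $g \in N_{X_\lambda}^p$ acts on $X_\lambda$ in the basis $\CB_\lambda$ as the monomial matrix $b_j \mapsto \tau_j\, b_{\pi(j)}$, and $C_{X_\lambda}^p$ embeds into the block-size-preserving monomial group in $\GL(X_\lambda)$.

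The heart of the argument is then to identify, in this monomial picture, which elements are reflections and what they generate. A monomial matrix on $\CB_\lambda$ is a reflection iff it is either of \emph{diagonal type} (some single $\tau_j \neq 1$, all other $\tau_{j'} = 1$, $\pi = \id$) or of \emph{transposition type} ($\pi = (j\,j')$ with $|N_j| = |N_{j'}|$, other $\tau_{j''} = 1$, and $\tau_j \tau_{j'} = 1$). Imposing $\det t = \tau_j^{i_s} \prod_{i \in N_0} t_i \in \mu_q$ (for diagonal type in block-size class $s$) then determines the allowed scalars. In Case 1 ($k < n$), the free entries of $t$ on $N_0$ absorb the determinant constraint, so $\tau_j$ ranges over all of $\mu_r$ in both types. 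In Case 2 ($k = n$, so $N_0 = \emptyset$), transposition type gives $(\tau_j \tau_{j'})^{i_s} = 1 \in \mu_q$, which is automatic, but diagonal type forces $\tau_j^{i_s} \in \mu_q = \langle \omega^p \rangle$, which is equivalent to $\tau_j \in \langle \omega^{p_{i_s}}\rangle$ with $p_{i_s} = p/\gcd(p,i_s) = \lcm(i_s,p)/i_s$.

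Finally, since each reflection in $C_{X_\lambda}^p$ involves only basis vectors from a single block-size class, the reflection subgroup $C_{X_\lambda}^\r$ splits as a direct product over $s$. Within class $s$, the diagonal and transposition reflections just identified match exactly the standard reflections generating $G(r,p_{i_s},m_s)$ in Case 2 (respectively $G(r,1,m_s)$ in Case 1), giving the claimed identification of $C_X^\r$. The main obstacle is the careful bookkeeping of the determinant condition $\det t \in \mu_q$ in Case 2: one must verify that this single global constraint, when restricted to the reflections, decouples across block-size classes and produces precisely the parameter $p_{i_s}$ on class $s$, even though the ambient group $C_X^p$ itself need not split as a direct product.
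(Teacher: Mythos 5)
Your proposal is correct and follows essentially the same route as the paper: reduce to $X_\lambda$ via \autoref{pro:redorb}, embed $C_X^p$ into the group of monomial matrices on $\CB_\lambda$ (so its reflections are among the $r_{jj'}(\zeta)$ and $r_{jj}(\zeta)$), and decide which of these are realized by tracking the constraint $\det t\in\mu_q$ --- the paper does exactly this by exhibiting the explicit lifts $s_{jj'}(\zeta)$ and $s_{jj}(\zeta)$ and running your determinant computation in the case $k=n$. One immaterial slip: your description of $Z_{X_\lambda}^p$ is too small, since an element of the pointwise stabilizer need only stabilize each block $N_j$ setwise rather than fix $[n]\setminus N_0$ pointwise (the paper computes $Z_{X_\lambda}^p=Z_\lambda(D_p\cap D_{n_0})\cong S_{n_1}\times\dotsb\times S_{n_a}\times G(r,p,n-k)$), but as $Z_{X_\lambda}^p$ enters your argument only as the kernel of $g\mapsto[[g]]$, this does not affect the conclusion.
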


\begin{proof} 
  By~\autoref{pro:redorb}, without loss of generality we may assume that
  $X=X_\lambda$.

  Suppose $\sigma\in S_n$ and $t\in D$. Then
  \[
  p_\sigma t(X) = (p_\sigma tp_\sigma \inverse) p_\sigma(X_\lambda) =
  (p_\sigma tp_\sigma \inverse) (X_{\sigma(L_\lambda)}).
  \]
  It is shown in \cite[Proposition 6.74]{orlikterao:arrangements} that
  $p_\sigma t(X_\lambda) = X_\lambda$ if and only if $\sigma(L_\lambda)
  =L_\lambda$ and $p_\sigma tp_\sigma \inverse\in D_\lambda$. Now
  $W_\lambda$ normalizes $D_\lambda$ and so
  \begin{equation}
    \label{eq:fa}
    p_\sigma t(X) = X\quad \text{if and only if}\quad p_\sigma\in W_\lambda
    \text{ and } t\in D_\lambda.    
  \end{equation}
  Note that $W_\lambda$ permutes the vectors in the basis
  $\CB_\lambda=\{b_1, \dots, b_a\}$ of $X$ and that each $b_i\in
  \CB_\lambda$ is a common eigenvector for $D_\lambda$.

  For a linear transformation $g\in \GL(X)$ let $[[g]]$ denote the matrix of
  $g$ with respect to $\CB_\lambda$. Then the rule $g\mapsto [[g]]$ defines
  a group homomorphism from $N_{X}^p$ to $\GL_a(\BBC)$ with kernel
  $Z_{X}^p$, and hence an injection from $C_{X}^p$ to $\GL_a(\BBC)$.  It
  follows from~\autoref{eq:fa} that the image of this mapping is contained
  in the subgroup $G(r,1,a)$ of $\GL_a(\BBC)$. In particular, every
  reflection in $C_{X}^p$ is one of the reflections listed
  in~\autoref{eq:refp}.

  Suppose $\zeta\in \mu_r$. If $1\leq i\ne j\leq a$ and $\ell_i=\ell_{j}$, define
  $s_{ij}(\zeta)\in \GL(V)$ by
  \[
  s_{ij}(\zeta) e_l=
  \begin{cases}
    \zeta e_{\bar \ell_{i-1}+ \nu} & \text{if $l= \bar \ell_{j-1} +\nu$,
      $\nu\in [\ell_j]$} \\
    \zeta\inverse e_{\bar \ell_{j-1}+ \nu} & \text{if $l= \bar \ell_{i-1}
      +\nu$,  $\nu\in [\ell_i]$} \\
    e_l & \text{otherwise,}
  \end{cases} 
  \]
  where as above, $\bar \ell_0=0$ and $\bar \ell_i= \ell_1+\dotsb + \ell_i$ for
  $i>0$. Then $s_{ij}(\zeta)$ acts on $\CB_\lambda$ by $b_j\mapsto \zeta
  b_i$, $b_i\mapsto \zeta\inverse b_j$, and $b_l\mapsto b_l$ for $l\ne
  i,j$, and so $s_{ij}(\zeta)\in N_{X}^p$ and $[[s_{ij}(\zeta)]] =
  r_{ij}(\zeta)$. If $1\leq i\ne j\leq n$ and $n_i\ne n_j$, then there is no
  $\sigma\in S_n$ such that $p_\sigma b_i=b_j$ and no element $p_\sigma t\in
  N_{X}^p$ with $[[p_\sigma t]] = r_{ij}(\zeta)$.
 
  Now suppose that $k<n$. For $1\leq i\leq a$ and $\zeta\in \mu_r$ define
  $s_{ii}(\zeta)\in \GL(V)$ by
  \[
  s_{ii}(\zeta) e_l=
  \begin{cases}
    \zeta e_l & \text{if $l\in \{\bar \ell_{i-1}+1 , \dots, \bar \ell_i\}$} \\
    e_l & \text{if $l\in [k] \setminus \{\bar \ell_{i-1}+1 , \dots, \bar
      \ell_i\}$} \\
    \zeta^{-\ell_i} e_{k+1} & \text{if $l=k+1$} \\
    e_l & \text{$k+1<l\leq n$.}
  \end{cases} 
  \]
  Then $s_{ii}(\zeta)$ acts on $\CB_\lambda$ by $b_i\mapsto \zeta b_i$ and
  $b_l\mapsto b_l$ for $l\ne i$, so $s_{ii}(\zeta)\in N_{X}^p$ and
  $[[s_{ii}(\zeta)]] = r_{ii}(\zeta)$ when $\zeta \ne 1$. Because
  $(C_X^p)^\r$ is generated by the reflections it contains, it follows that
  \[
  (C_X^p)^\r\cong G(r,1,m_1) \times \dotsb \times G(r,1,m_c)
  \quad\text{when} \quad k<n.
  \]

  Finally, suppose $k=n$ and $1\leq i\leq a$. If $\zeta\in \mu_r\setminus
  \{1\}$ and $t\in N_X^p$ is such that $[[t]]= r_{ii}(\zeta)$, then $t\in
  D_p$, $tb_i= \zeta b_i$, and $tb_l=b_l$ for $l\ne i$. Thus,
  \[
  \zeta^{n_i}= \det t \in \langle \omega^{n_i} \rangle \cap \langle
  \omega^{p} \rangle = \langle \omega^{\lcm(n_i,p)} \rangle
  \]
  and so $\zeta\in \langle \omega^{p_i} \rangle$. Conversely, if $\zeta\in
  \langle \omega^{p_i} \rangle\setminus \{1\}$ and $t\in \GL(V)$ is defined
  by
  \[
  te_l=
  \begin{cases}
    \zeta e_l &\text{if $l=\bar n_{i-1}+\nu$, $\nu\in [n_i]$} \\
    e_l &\text{if $l\in [n]\setminus \{\bar n_{i-1}+1, \dots, \bar n_i\}$,}
  \end{cases} 
  \]
  then clearly $t\in N_X^p$ and $[[t]]= r_{ii}(\zeta)$.  Because
  $(C_X^p)^\r$ is generated by the reflections it contains, it follows that
  \[
  (C_X^p)^\r\cong G(r,p_{i_1},m_1) \times \dotsb \times G(r,p_{i_c}, m_c)
  \quad\text{when} \quad k=n.
  \]
This completes the proof of the proposition.
\end{proof}

Muraleedaran \cite{muraleedaran:normalizers} has investigated the structure
of the groups $C_{X_\lambda}^p$ and shown that when $(C_{X_\lambda}^p)^\r=
C_{X_\lambda}^p$, the elements $s_{ij}(\zeta)$ defined above generate a
complement to $Z_{X_\lambda}^p$ in $N_{X_\lambda}^p$.
 
The next corollary follows from the preceding equation
and~\autoref{pro:cxref}.

\begin{corollary}\label{cor:cxref}
  Suppose $X\in L(\CA(G_p))$ is in the $G_p$-orbit of $X_{\lambda,u}$, where
  $\lambda$ is a partition of $k$ for some $1\leq k\leq n$ and $u$ is an
  integer with $0\leq u<\delta_{\lambda,p}$. Say $\lambda= (i_i^{m_1},
  \dots, i_c^{m_c})$ with $i_1> \dots >i_c>0$.
  \begin{enumerate}
  \item If $k<n$, then $(C_X^p)^\r=C_X^p$.
  \item If $k=n$, then
    \[
    |C_X^p: (C_X^p)^\r| = p^{c-1} \frac {\gcd(i_1, \dots, i_c, p)}
    {\gcd(i_1, p) \dotsb \gcd(i_c, p)}.
    \]
    In particular,
    \begin{enumerate}
    \item $C_X^p$ acts on $X$ as a reflection group if and only if $p^{c-1}
      \frac {\gcd(i_1, \dots, i_c, p)} {\gcd(i_1, p) \dotsb \gcd(i_c, p)}
      =1$, and
    \item if $\lambda=(i^m)$ has only one distinct part, then
      \[
      C_X^p= (C_X^p)^\r \cong G(r, p_i, m),
      \]
      where $p_i= \lcm(i,p)/i$.
    \end{enumerate}
  \end{enumerate}
\end{corollary}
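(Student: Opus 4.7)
The plan is to prove the corollary by a direct computation of orders, comparing $|C_X^p|$ with $|(C_X^p)^\r|$ using the structural descriptions already in hand. From the displayed formula immediately preceding the corollary, $|C_X^p| = m_1! \dotsb m_c!\, r^a$ when $k<n$ and $|C_X^p| = m_1! \dotsb m_c!\, r^a \delta_{\lambda,p}/p$ when $k=n$. From Proposition~\ref{pro:cxref} together with the well-known identity $|G(r,p,m)| = m!\,r^m/p$, one computes $|(C_X^p)^\r|$ as a product.

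For part (1), the case $k<n$, I would simply observe that
\[
|(C_X^p)^\r| = \prod_{j=1}^c |G(r,1,m_j)| = \prod_{j=1}^c m_j!\, r^{m_j} = m_1! \dotsb m_c!\, r^a,
\]
matching $|C_X^p|$ exactly, so the inclusion $(C_X^p)^\r \subseteq C_X^p$ is an equality.

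For part (2), the case $k=n$, I would compute
\[
|(C_X^p)^\r| = \prod_{j=1}^c |G(r,p_{i_j},m_j)| = \frac{m_1! \dotsb m_c!\, r^a}{p_{i_1}\dotsb p_{i_c}},
\]
and then use the key arithmetic identity $p_{i_j} = \lcm(i_j,p)/i_j = p/\gcd(i_j,p)$ to rewrite this as $m_1! \dotsb m_c!\, r^a \gcd(i_1,p)\dotsb\gcd(i_c,p)/p^c$. Dividing $|C_X^p|$ by this, and noting that $\delta_{\lambda,p} = \gcd(p,n_1,\dots,n_a) = \gcd(p, i_1, \dots, i_c)$ because the $n_j$ are just the $i_j$ repeated with multiplicities, produces the asserted index formula. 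The subclaim (2a) is then the tautology $C_X^p = (C_X^p)^\r$ iff the index is $1$, and (2b) follows by plugging $c=1$ into the index formula (giving $p^0 \cdot \gcd(i,p)/\gcd(i,p) = 1$) and reading off the shape of $(C_X^p)^\r$ from Proposition~\ref{pro:cxref}.

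There is no serious obstacle; the only place where one could slip is confirming that $\delta_{\lambda,p}$, defined in terms of all parts $n_1,\dots,n_a$ of $\lambda$, equals $\gcd(p,i_1,\dots,i_c)$ in terms of the distinct parts, and in handling the identity $\lcm(i,p)/i = p/\gcd(i,p)$ cleanly. Both are routine, so the argument is essentially bookkeeping on top of the structural result in Proposition~\ref{pro:cxref} and the order count preceding the corollary.
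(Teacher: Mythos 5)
Your computation is correct and is exactly the argument the paper intends: the corollary is deduced by comparing the order formula for $|C_X^p|$ displayed just before it with the order of $(C_X^p)^\r$ read off from Proposition~\ref{pro:cxref}, using $|G(r,p,m)|=m!\,r^m/p$ and $p_{i_j}=p/\gcd(i_j,p)$. No gaps; the identification $\delta_{\lambda,p}=\gcd(p,i_1,\dots,i_c)$ that you flag is indeed the only point needing a remark, and it holds for the reason you give.
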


Using this corollary we can prove the following result.

\begin{corollary}\label{cor:exp}
  Suppose $X\in L(\CA(G_p))$. If $\exp(C_X^p)\subseteq \exp(G_p)$, then
  $(C_X^p)^\r=C_X^p$.
\end{corollary}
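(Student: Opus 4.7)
The plan is to reduce to the case $X = X_\lambda$ for a partition $\lambda = (i_1^{m_1}, \dots, i_c^{m_c})$ of some $k$ with $0 \leq k \leq n$, using \autoref{pro:redorb} and \autoref{cor:or}. For $k < n$, \autoref{cor:cxref}(1) already yields $(C_X^p)^\r = C_X^p$, so the content is in the case $k = n$. By \autoref{cor:cxref}(2), $C_X^p$ is a reflection group precisely when $p^{c-1}\gcd(i_1,\dots,i_c,p)/\bigl(\gcd(i_1,p)\dotsb \gcd(i_c,p)\bigr) = 1$; setting $d_j = \gcd(i_j, p)$, this holds automatically when $c = 1$, so it suffices to show that the hypothesis forces $c = 1$.

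Assume for contradiction that $c \geq 2$. By \autoref{pro:cxref}, $(C_X^p)^\r \cong \prod_j G(r, p_{i_j}, m_j)$, whose exponents form the multiset union over $j$ of $\{r-1, 2r-1, \dots, (m_j-1)r-1,\ m_j q d_j - 1\}$, where $q = r/p$. The exponents of $G_p$ are $r - 1, 2r - 1, \dots, (n-1)r - 1, nq - 1$. I would first observe that for each $j$ the top exponent $m_j q d_j - 1$ must lie in $\exp(G_p)$; it cannot equal $nq - 1$ because $c \geq 2$ gives $m_j d_j \leq m_j i_j < n$ strictly. So $m_j q d_j - 1 = k_j r - 1$ for some integer $k_j \geq 1$, which forces $p \mid m_j d_j$, hence $m_j d_j \geq p$; combined with $d_j \mid i_j$ this gives $i_j m_j \geq p$ for every $j$.

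The main step is then a multiplicity count at the value $r - 1$. In $\exp(G_p)$ this multiplicity is $1$ unless $nq - 1 = r - 1$, i.e.\ $n = p$, in which case it is $2$. In $\exp((C_X^p)^\r)$, every index $j$ contributes at least once: if $m_j \geq 2$ then the lower part of the $j\th$ factor already contains $r - 1$, and if $m_j = 1$ then $p_{i_j} \mid m_j$ forces $p_{i_j} = 1$, hence $d_j = p$, so the top exponent of the $j\th$ factor equals $r - 1$. Thus the multiplicity of $r - 1$ in $\exp((C_X^p)^\r)$ is at least $c$.

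Combining the two bounds gives $c \leq 2$, and $c = 2$ requires $n = p$. But $i_j m_j \geq p$ for every $j$ gives $n = \sum_j i_j m_j \geq cp \geq 2p$, contradicting $n = p$. Hence $c = 1$ and the index equals $1$, proving $(C_X^p)^\r = C_X^p$. The main obstacle I anticipate is choosing which exponent value to use for the multiplicity count; the value $r - 1$ works because every factor necessarily contributes to it, and no appeal to the more delicate higher values $kr - 1$ should be needed.
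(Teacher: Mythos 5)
Your proof is correct, and while it rests on the same ingredients as the paper's argument (the description of $(C_X^p)^\r$ as $\prod_j G(r,p_{i_j},m_j)$ from \autoref{pro:cxref}, the index formula in \autoref{cor:cxref}, the explicit exponent lists, and ultimately a multiplicity count at the value $r-1$), it is organized quite differently and more uniformly. The paper splits into three cases: a special argument when $n=p$; the case where two distinct parts have multiplicity greater than $1$; and the case where at most one does, which itself splits according to whether $p\mid i_j$. Your unifying observation --- that each top exponent $m_jqd_j-1$ with $d_j=\gcd(i_j,p)$ cannot equal $nq-1$ when $c\geq 2$, hence has the form $k_jr-1$, hence $p\mid m_jd_j$, equivalently $p_{i_j}\mid m_j$ --- collapses all of this: it simultaneously guarantees that \emph{every} factor contributes a copy of $r-1$ (giving multiplicity at least $c$, so $c\leq 2$ with $c=2$ forcing $n=p$) and that $m_ji_j\geq p$ for every $j$ (so $n=\sum_j m_ji_j\geq cp$, killing the $c=2$, $n=p$ case). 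This buys a single linear argument in place of the paper's case analysis; the paper's version, by contrast, makes the special role of the $n=p$ case explicit up front. One small presentational point: when you write ``$p_{i_j}\mid m_j$ forces $p_{i_j}=1$'' in the $m_j=1$ case, it would be worth saying explicitly that $p_{i_j}\mid m_j$ is exactly the divisibility $p\mid m_jd_j$ established in the previous step, since it is not automatic from the definition of $G(r,p_{i_j},m_j)$.
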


\begin{proof}
  As in the proof of the proposition, we may assume that $X=X_\lambda$ where
  $\lambda= (i_i^{m_1}, \dots, i_c^{m_c})$ is a partition of $k$ for some
  $0\leq k\leq n$. By \autoref{cor:cxref}, if $k<n$, then
  $(C_X^p)^\r=C_X^p$. Thus we may assume that $k=n$. Then $m_1i_1+\dots +
  m_ci_c=n$ and by \autoref{pro:cxref}, $(C_{X}^p)^\r \cong G(r,p_{i_1},m_1)
  \times \dotsb \times G(r,p_{i_c}, m_c)$ where $p_{i_j}=
  p/\gcd(i_j,p)$. Using \autoref{cor:cxref} again, it is enough to show that
  $c=1$.

  It is well-known that $\exp(G(r,p,n)) = \{r-1, 2r-1, \dots, (n-1)r-1,
  nr/p-1\}$ (see \cite[Appendix B.4]{orlikterao:arrangements}). First
  consider the special case when $n=p$. Then $r=qn$ and $\exp(G_n)= \{qn-1,
  2qn-1, \dots, (n-1)qn-1, qn-1\}$. Also, $m_1q \cdot \gcd(n, i_1) -1$ is an
  exponent of $(C_X^n)^\r$ and so an exponent of $G_n$. But then $m_1q\cdot
  \gcd(n, i_1) -1 = sqn-1$ for some $s\geq 1$. Thus $m_1\cdot \gcd(n, i_1) =
  qn$, and so $s=1$ and $m_1i_1=n$. It follows that $c=1$ as desired.

  Next, just suppose that $c>1$ and that there exists $j_1$ and $j_2$ such
  that $j_1\ne j_2$ and $m_{j_1}, m_{j_2}>1$. Then $r-1$ occurs as an
  exponent of $(C_X^p)^\r$ with multiplicity at least $2$. But $r-1$ is an
  exponent of $G_p$ with multiplicity greater than $1$ if and only if $p=n$,
  and if $p=n$, then $c=1$, a contradiction. Thus it cannot be the case that
  $c>1$ and there exists $j_1$ and $j_2$ such that $j_1\ne j_2$ and
  $m_{j_1}, m_{j_2}>1$. Therefore, either $c=1$, or $c>1$ and there is an
  $s$ such that $m_j=1$ for $j \neq s$.

  Finally, just suppose that $c>1$ and $s$ is such that $m_j=1$ for $j \neq
  s$. Reordering $\lambda$ if necessary we may assume that $m_j=1$ for
  $j>1$. Suppose first that $p\nmid i_j$ for some $j>1$. Then $r /p_{i_j} -1
  = r \cdot \gcd(p, i_j)/p -1< r-1$.  The only exponent of $G_p$ that could
  be less than $r-1$ is $nr/p-1$, so $nr/p-1= r/p_{i_j}-1$. But then $n/p =
  1/p_{i_j}= \gcd(p, i_j)/p$ and so $n= \gcd(p, i_j)$. Because $c>1$,
  $i_j<n$ and so $n\ne \gcd(p, i_j)$, a contradiction.  Suppose on the other
  hand that $p| i_j$ for all $j\geq 2$. Then $r-1$ is an exponent of
  $(C_X^p)^\r$, and hence of $G_p$, with multiplicity $c>1$. Again, $r-1$ is
  an exponent of $G_p$ with multiplicity greater than $1$ if and only if
  $p=n$, and if $p=n$, then $c=1$, a contradiction. It follows that it must
  be the case that $c=1$, as claimed.
\end{proof}

\subsection*{Classification of \texorpdfstring{$\mathbf{X \in
      L(\CA(G_p))}$}{xlanr} with \texorpdfstring{$\mathbf{\CA(C_X) =
      \CA(G_p)^X}$}{cx=cxr}}

In this subsection we classify all subspaces $X\in L(\CA(G_p))$ with
$\CA(C_X^p) = \CA(G_p)^X$ and show that if $\CA(C_X^p) = \CA(G_p)^X$, then
$C_X$ acts on $X$ as a reflection group.

\begin{theorem}\label{thm:im}
  Suppose $X\in L(\CA(G_p))$ is in the $G_p$-orbit of $X_{\lambda,u}$, where
  $\lambda$ is a partition of $k$ for some $0\leq k\leq n$ and $u$ is an
  integer with $0\leq u<\delta_{\lambda,p}$.
  \begin{enumerate}
  \item If $\CA(C_X^p) = \CA(G_p)^X$, then $\lambda= (i^m)$ has only one
    distinct part.
  \item If $\lambda=(i^m)$, then $\CA(C_X^p) = \CA(G_p)^X$ unless $p=r$,
    $k=n$, $\gcd(r,i)=1$, and $i>1$.
  \end{enumerate}
\end{theorem}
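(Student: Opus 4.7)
My plan is to reduce at once to the case $X = X_\lambda$: applying \autoref{pro:redorb} first to an element of $G_p$ moving $X$ to the orbit representative $X_{\lambda,u}$, and then to $t_0^u \in \GL(V)$, which normalizes $G_p$, replaces the problem with the same problem for $X_\lambda$. Working in the basis $\CB_\lambda = \{b_1,\ldots,b_a\}$ with dual coordinates $y_1,\ldots,y_a$, \autoref{lem:ptH} lets me tabulate every non-trivial restriction $H \cap X_\lambda$ for $H \in \CA(G_p)$: such a restriction is either of the form $\{y_j = \zeta y_{j'}\}$, arising from $H_{ll'}(\zeta)$ with $l\in N_j$, $l'\in N_{j'}$ and $j\ne j'$ in $[a]$, or of the form $\{y_j = 0\}$, which occurs as the restriction of (a) some $H_l$ with $l\in N_j$ (which lies in $\CA(G_p)$ only when $p<r$), (b) some $H_{ll'}(\zeta)$ with exactly one of $l,l'$ in $N_0$ (possible only when $k<n$), or (c) some $H_{ll'}(\zeta)$ with $l,l'\in N_j$ and $\zeta\ne 1$ (possible only when $n_j\ge 2$). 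Meanwhile, \autoref{pro:cxref} together with the explicit reflections $s_{jj'}(\zeta)$ and $s_{jj}(\zeta)$ given in its proof identifies the reflecting hyperplanes of $(C_X^p)^\r$: the hyperplanes $\{y_j = \zeta y_{j'}\}$ appear exactly when $n_j = n_{j'}$, and the coordinate hyperplane $\{y_j = 0\}$ appears exactly when $(C_X^p)^\r$ contains a non-trivial $s_{jj}$-reflection, namely when $k<n$, or when $k=n$ and $p_i<r$, writing $p_i = p/\gcd(p,i)$ and $i = n_j$.

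For part (1), I will produce a single witnessing hyperplane. Suppose $\lambda$ has at least two distinct parts and pick $j\ne j'$ in $[a]$ with $|N_j| \ne |N_{j'}|$. Then $\{y_j = y_{j'}\}$ lies in $\CA(G_p)^X$ as the restriction of $H_{ll'}(1)$ for any $l\in N_j$ and $l'\in N_{j'}$, but it does not lie in $\CA(C_X^p)$ because the construction of the reflections $s_{jj'}(\zeta)$ requires $n_j = n_{j'}$. Hence the two arrangements differ, giving the contrapositive of (1).

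For part (2), with $\lambda = (i^m)$ every block has the common size $i$, so the hyperplanes $\{y_j = \zeta y_{j'}\}$ for $j\ne j'$ and $\zeta \in \mu_r$ lie in both arrangements, and the equality $\CA(C_X^p) = \CA(G_p)^X$ reduces to matching the coordinate hyperplanes $\{y_j=0\}$. From the two descriptions above, $\{y_j=0\} \in \CA(G_p)^X$ fails only when all three sources (a), (b), (c) fail simultaneously, namely when $p=r$, $k=n$, and $i=1$; whereas $\{y_j=0\} \in \CA(C_X^p)$ fails only when $k=n$ and $p_i=r$, which happens precisely when $p=r$ and $\gcd(r,i)=1$. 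Comparing these two failure conditions, they disagree exactly on the set where $p=r$, $k=n$, $i>1$, and $\gcd(r,i)=1$, which is the exceptional case in the statement. The main obstacle is simply keeping this $(p,k,i)$-bookkeeping straight; no individual step is technically deep once the restriction tables are in hand.
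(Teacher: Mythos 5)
Your argument is correct and follows essentially the same route as the paper: reduce to $X=X_\lambda$ via \autoref{pro:redorb}, determine $\CA(G_p)^{X}$, read off the reflecting hyperplanes of $C_X^p$ from \autoref{pro:cxref}, and compare, with the same $(p,k,i)$ case analysis for part~(2). The only cosmetic difference is that you re-derive the restricted arrangement directly from \autoref{lem:ptH} and exhibit an explicit witness hyperplane for part~(1), whereas the paper cites the Orlik--Solomon--Terao formula \autoref{eq:AX} and argues that a product $\CA_{m_1}^{m_1}\times\dotsm\times\CA_{m_c}^{m_c}$ with $c>1$ cannot equal $\CA_a^a$ or $\CA_a^{\psi_1}$.
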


\begin{proof}
  As in the preceding subsection, we assume without loss of generality that
  $X=X_\lambda$ and that $\lambda=(n_1, \dots, n_a) = (i_1^{m_1}, \dots,
  i_c^{m_c})$, where $n_1\geq \dots \geq n_a>0$ and
  $i_1>\dots>i_c>0$. Recall that $\dim X =a$.

  The restricted arrangements $\CA(G_p)^X$ have been computed by Orlik,
  Solomon, and Terao~\cite[\S6.4] {orlikterao:arrangements}. Let
  $\psi_1= |\{\,i\in [a]\mid n_i>1 \,\}|$ denote the number of parts of
  $\lambda$ greater than $1$. Then
  \begin{equation}
    \label{eq:AX}
    \CA(G_p)^X=
    \begin{cases}
      \CA_a^a &\text{if $p<r$ or $k<n$} \\
      \CA_a^{\psi_1} &\text{if $p=r$ and $k=n$.}
    \end{cases}
  \end{equation}
  
  To prove the first statement, suppose that $\CA(C_X^p) =
  \CA(G_p)^X$. There are two cases depending on $k$, $n$, and $r$.

  One case is when $p<r$ or $k<n$. Then
  \[
  \CA(G_p)^X= \CA_a^a.
  \]
  By~\autoref{pro:cxref}, $C_{X}^\r \cong G(r,\hat p_1,m_1) \times \dotsb
  \times G(r,\hat p_c, m_c)$, where for $1\leq l\leq c$, $\hat p_l=1$ if
  $k<n$ and $\hat p_l= \lcm(i_l,p)/i_l$ if $k=n$. Notice that
  $\lcm(i_l,p)/i_l<r$, because if $k=n$, then $p<r$. Thus $\hat p_l<r$ for
  $1\leq l\leq c$ and so
  \[
  \CA(C_X^p) = \CA_{m_1}^{m_1} \times \dotsb \times \CA_{m_c}^{m_c}.
  \]
  Since $\CA(C_X^p) = \CA(G_p)^X$, it must be that $c=1$ and
  $\lambda=(i^m)$, where $im=k$ and $m=\dim X=a$.

  The second case is when $p=r$ and $k=n$. Then $\CA(G_r)^X=
  \CA_a^{\psi_1}$, where $\psi_1$ is as above. Because $\CA(G_r)^X=
  \CA(C_X^r)$ is a reflection arrangement and $\CA_a^{\psi_1}$ is a
  reflection arrangement if and only if $\psi_1=0$ or $\psi_1=a$, we have
  that either
  \[
  \CA(G_r)^X= \CA_a^{0}\quad\text{or}\quad \CA(G_r)^X= \CA_a^{a}.
  \] 
  If $\CA(G_p)^X= \CA_a^{0}$, then all parts of $\lambda$ are equal to $1$,
  so $\lambda=(1^n)$. If instead $\CA(G_p)^X= \CA_a^{a}$, then using
  \autoref{pro:cxref} again we have $C_{X}^\r \cong G(r, p_{i_1},m_1) \times
  \dotsb \times G(r,p_{i_c}, m_c)$, where for $1\leq l\leq c$, $p_{i_l}
  =\lcm(i_l,r)/i_l= r/\gcd( i_l,r)$. Therefore,
  \[
  \CA(C_X^r) = \CA_{m_1}^{m_1'} \times \dotsb \times \CA_{m_c}^{m_c'},
  \]
  where $m_l'= 0$ if $\gcd(i_l, r)= 1$ and $m_l'= m_i$ if $\gcd(i_l, r)>
  1$. Finally, because $\CA^a_a= \CA(G_r)^X =\CA(C_X^r)$, it must be that
  $c=1$ and $\lambda =(i^m)$, where now $i>1$ and $\gcd(i,r)>1$.

  To prove the second statement, suppose that $\lambda= (i^m)$ has only one
  distinct part. Suppose also that $k\ne n$, or $p\ne r$, or
  $\gcd(p,i)\ne1$. Then $i\ne 1$ and so by \autoref{eq:AX} and
  \autoref{pro:cxref} $\CA(C_X^p) =\CA^m_m= \CA(G_p)^X$ as
  claimed. Otherwise $k=n$, $p=r$ and $\gcd(p,i)=1$. If $i=1$, then $X=V$
  and $C_X^p= G_p$, and so $\CA(C_X^p) =\CA= \CA(G_p)^X$ as
  claimed. Finally, if $i>1$, then by \autoref{pro:cxref} $\CA(C_X^p)=
  \CA_m^0$ and by \autoref{eq:AX} $\CA(G_p)^X =\CA_m^m$, so $\CA(C_X^p)
  \ne\CA(G_p)^X$.
\end{proof}

The next corollary follows from the theorem and~\autoref{cor:cxref}.

\begin{corollary}\label{cor:ref}
  Suppose $X\in L(\CA(G_p))$ and $\CA(C_X^p) = \CA(G_p)^X$. Then $C_X^p$
  acts on $X$ as a reflection group.
\end{corollary}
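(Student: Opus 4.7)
The plan is to combine \autoref{thm:im} with \autoref{cor:cxref} in a direct, case-by-case manner. By \autoref{pro:redorb}, we may reduce to the case $X = X_{\lambda,u}$ for some partition $\lambda = (n_1, \dots, n_a)$ of some integer $k$ with $0\leq k\leq n$, and some $0\leq u<\delta_{\lambda,p}$; the hypothesis $\CA(C_X^p)=\CA(G_p)^X$ is preserved under this reduction, as is the conclusion that $C_X^p$ acts on $X$ as a reflection group.

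Next, I would invoke the first part of \autoref{thm:im}: since $\CA(C_X^p)=\CA(G_p)^X$, the partition $\lambda$ must have a single distinct part, say $\lambda=(i^m)$. At this point the conclusion is immediate from \autoref{cor:cxref}. If $k<n$, then part (1) of \autoref{cor:cxref} gives $(C_X^p)^\r = C_X^p$ with no further hypothesis on $\lambda$. If $k=n$, then $\lambda = (i^m)$ has only one distinct part, so part (2)(b) of \autoref{cor:cxref} applies and yields
\[
C_X^p = (C_X^p)^\r \cong G(r, p_i, m),
\]
where $p_i = \lcm(i,p)/i$. In either case $C_X^p = (C_X^p)^\r$, so $C_X^p$ acts on $X$ as a reflection group.

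There is essentially no obstacle here beyond correctly invoking the two previous results; the real work has already been done in proving \autoref{thm:im} (the classification of subspaces with $\CA(C_X^p) = \CA(G_p)^X$ forces $\lambda$ to have only one distinct part) and in the index computation of \autoref{cor:cxref} (which shows that in the one-distinct-part case the quotient index $|C_X^p : (C_X^p)^\r|$ collapses to $1$). The only mild subtlety is remembering that in the $k<n$ case one does not need $\lambda$ to have a single distinct part at all, so the application of \autoref{cor:cxref}(1) is automatic once we are in the reduced setting.
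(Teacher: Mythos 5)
Your proposal is correct and is essentially the paper's own argument: the paper derives this corollary directly from Theorem \ref{thm:im} (forcing $\lambda=(i^m)$) together with Corollary \ref{cor:cxref} (parts (1) and (2)(b)), exactly as you do. The only cosmetic remark is that the degenerate case $k=0$ (so $X=\{0\}$ and $C_X^p$ trivial) is handled trivially and need not be routed through Corollary \ref{cor:cxref}.
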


\subsection*{Proof of \autoref{thm:main} for \texorpdfstring{$
    \mathbf{ G(r,p,n)}$}{G(r,p,n)}}\label{sec:proofmain}

Suppose $X\in L(\CA(G_p))$ is such that $\CA(C_X^p) = \CA(G_p)^X$ and
$\exp(C_X^p) \subseteq \exp(G_p)$. We need to show that the restriction map
$\rho_X\colon \BBC[V]^G\to \BBC[X]^{C_X^p}$ is surjective. By the assumption
that $\CA(C_X^p) = \CA(G_p)^X$, \autoref{thm:im}, and \autoref{pro:redorb},
we may assume that $X= X_\lambda$ where $\lambda=(i^m)$. It then follows
from \autoref{cor:cxref} that $(C_X^p)^\r =C_X^p$.

If $m=n$, then $i=1$ and so $X=V$, $C_X^p=G_p$, and $\rho_X$ is the identity
map. Thus, in this case $\CA(C_X^p) = \CA(G_p)^X$, $\exp(C_X^p) \subseteq
\exp(G_p)$, and $\rho_X$ is surjective. In the rest of the proof we assume
that $m<n$ and so $X$ is a proper subspace of $V$.

Next, it follows from~\autoref{pro:cxref} that $C_X^p \cong G(r,\hat p, m)$,
where $\hat p=1$ if $im<n$ and $\hat p= p/\gcd(i,p)$ if
$im=n$. Equivalently,
\[
C_X^p \cong
\begin{cases}
  G(r,1,m) &\text{if $im<n$ or if $im=n$ and $p|i$} \\
  G(r, p/\gcd(i,p), m) &\text{if $im=n$ and $\gcd(i,p)<p$.}
\end{cases}
\]
By \cite[Table B.1]{orlikterao:arrangements} we have
\begin{itemize}
\item $\exp(G_p)= \{r-1, \dots, (n-1)r-1, nq -1\}$,
\item $\exp(C_X^p)= \{ r-1, \dots, (m-1)r-1, mr -1\}$ if $im<n$, or if
  $im=n$ and $p|i$, and
\item $\exp(C_X^p)= \{ r-1, \dots, (m-1)r-1, m \hat q -1\}$, where $\hat q=
  r\cdot \gcd(i,p)/ p$ if $im=n$ and $\gcd(i,p)<p$.
\end{itemize}

Because $m<n$, if $im<n$, or if $im=n$ and $p|i$, then the assumption that
$\exp(C_X^p) \subseteq \exp(G_p)$ is superfluous. On the other hand, if
$im=n$ and $\gcd(i,p)<p$, then $m\hat q-1\in \exp(G_p)$, and so either
$m\hat q = lr$ for some $l$ with $m\leq l\leq n-1$ or $m\hat q = nq$. Just
suppose that $m\hat q = lr$ for some $l$ with $m\leq l\leq n-1$. Then $m
\cdot \gcd(i,p) /p =l \geq m$, so $\gcd(i,p) \geq p$, so $p|i$,
contradicting the assumption that $\gcd(i,p)<p$. Therefore, it must be the
case that $m\hat q = nq$. Then $m\cdot \gcd(i,p) =n$ and $n=im$, so
$\gcd(i,p) =i$. Thus, $i|p$.

Summarizing the preceding discussion, there are two cases. Either
\begin{enumerate}
\item $im<n$, or $im=n$ and $p|i$, in which case $C_X^p\cong G(r,1,m)$ and
  $\exp(C_X^p)= \{ r-1, \dots, (m-1)r-1, mr -1\}$, or
\item $im=n$ and $i|p$, in which case $C_X^p\cong G(r,p/i,m)$ and
  $\exp(C_X^p)= \{ r-1, \dots, (m-1)r-1, nq -1\}$.
\end{enumerate}

Now consider the restriction map $\rhoXtilde\colon \BBC[V]\to
\BBC[X]$. Recall that $\{x_1, \dots, x_n\}$ is the basis of $V^*$ dual to
the basis $\{e_1, \dots, e_n\}$ of $V$ and let $\{y_1, \dots, y_m\}$ be the
basis of $X^*$ dual to $\CB_\lambda$. Then $\BBC[V] = \BBC[x_1, \dots,
x_n]$, $\BBC[X]= \BBC[y_1, \dots, y_m]$,
\begin{align*}
  \rho_X(x_1) &= \dotsm = \rho_X(x_i) = y_1, \\
  \rho_X(x_{i+1}) &= \dotsm = \rho_X(x_{2i}) = y_2, \\
  & \hskip 4.2ex \vdots \\ 
  \rho_X(x_{(m-1)i+1}) &= \dotsm = \rho_X(x_{mi}) = y_m,\\
  \intertext{and} \rho_X(x_{mi+1}) &= \dotsm = \rho_X(x_n) = 0.
\end{align*}
For $l\geq 1$ let 
\[
f_l= (x_1^r)^l+ \dots +(x_n^r)^l \quad \text{and}\quad \bar f_l= (y_1^r)^l+
\dots +(y_m^r)^l
\]
denote the $l\th$ power sums in $x_1^r, \dots, x_n^r$ and $y_1^r, \dots,
y_m^r$, respectively. Then
\[
\BBC[V]^{G_p}= \BBC[f_1, \dots, f_{n-1}, (x_1\dotsm x_n)^{q}],
\]
$\rhoXtilde( f_l) = i \bar f_l$ for all $l\geq 1$, and
\[
\rhoXtilde( (x_1\dotsm x_n)^{q})=
\begin{cases}
  0&\text{if $im<n$} \\
  (y_1\dotsm y_m)^{iq} &\text{if $im=n$.}
\end{cases}
\]
If $im<n$, or if $im=n$ and $p|i$, then
\[
\BBC[X]^{C_X^p}= \BBC[\bar f_1, \dots, \bar f_m] = \BBC[\rho_X(f_1), \dots,
\rho_X(f_m)],
\]
and so $\rho_X$ is surjective in this case.

If $im=n$ and $i|p$, then
\begin{align*}
  \BBC[X]^{C_X^p} & = \BBC[\bar f_1, \dots, \bar f_{m-1}, (y_1\dotsm y_m)^{\hat
    q}] \\ & =\BBC[\bar f_1, \dots, \bar f_{m-1}, (y_1\dotsm y_m)^{iq}] \\ & =
  \BBC[\rho_X(f_1), \dots, \rho_X(f_{m-1}), \rho_X( (x_1\dotsm x_n)^{q}) ],
\end{align*}
and so $\rho_X$ is surjective in this case as well. This completes the proof
of \autoref{thm:main} when $G=G(r,p,n)$.

\section{The exceptional irreducible unitary reflection
  groups}\label{sec:exc}

In this section we suppose that $G$ is one of the thirty-four exceptional
unitary reflection groups labeled $G_4$, \dots, $G_{37}$ and we classify the
subspaces $X\in L(\CA(G))$ such that $\CA(C_X) = \CA(G)^X$, we classify the
subspaces $X\in L(\CA(G))$ such that $C_X$ acts on $X$ as a reflection
group, and we complete the proof of~\autoref{thm:main} for these groups.

\subsection*{Classification of \texorpdfstring{$\mathbf{X}$}{xlanr} with
  \texorpdfstring{$\mathbf{\CA(C_X) = \CA(G)^X}$}{acx=ax},
  \texorpdfstring{$\mathbf{C_X = C_X^{\operatorname{\mathbf
          {ref}}}}$}{cx=cxr}, or \texorpdfstring{$\mathbf{\operatorname{
        \mathbf {exp}}(C_X) \subseteq \operatorname{\mathbf {exp}}
      (G)}$}{exp}} \label{ssec:exref}

First, as in \autoref{ssec:red}, it is enough to consider one representative
from each orbit of $G$ on $L(\CA(G))$. For the exceptional unitary
reflection groups, orbit representatives for the action of $G$ on
$L(\CA(G))$ are given in \cite[Appendix C]{orlikterao:arrangements}. We use
the labeling in those tables. Specifically, it is shown in \cite[Lemma
6.88]{orlikterao:arrangements} that for $X,Y\in L(\CA(G))$, $X$ and $Y$ are
in the same $G$-orbit if and only if $Z_X$ and $Z_Y$ are conjugate, and that
in most cases the reflection type of $Z_X$ uniquely determines $X$. Thus,
the reflection type of the subgroups $Z_X$ is used to index the orbit
containing $X$. When there are two orbits whose pointwise stabilizers have
the same reflection type, we label the orbits as in \cite[Appendix
C]{orlikterao:arrangements} with $'$ and $''$. For example, in the group
$G_{27}$ there are two orbits whose pointwise stabilizer has reflection type
$A_2$. These are denoted by $A_2'$ and $A_2''$.

Next, if $X\in L(\CA(G))$, then whether or not $\CA(C_X) = \CA(G)^X$ and
whether or not $C_X = C_X^\r$ can be determined from knowledge of the
reflection type of $C_X^\r$ as we now describe.

It was shown in the proof of \autoref{thm:surj} that $\CA(C_X) \subseteq
\CA(G)^X$, so equality holds if and only if $|\CA(C_X)|=
|\CA(G)^X|$. Because $\CA(C_X)$ is a reflection arrangement, by
\cite[Corollary 6.63]{orlikterao:arrangements} $|\CA(C_X)|$ is the sum of
the coexponents of $C_X^\r$. The coexponents of the irreducible unitary
reflection groups are given in \cite[Appendix B.4]{orlikterao:arrangements}.
By \cite[\S 6.4]{orlikterao:arrangements} and \cite{hogeroehrle:heredfree},
$\CA(G)^X$ is a free arrangement, and so by \cite[Theorem
4.23]{orlikterao:arrangements} $|\CA(G)^X|$ is the sum of the exponents of
$\CA(G)^X$. The exponents of $\CA(G)^X$ are given in the last sections of
the tables in \cite[Appendix C]{orlikterao:arrangements}. Thus, whether or
not $\CA(C_X) = \CA(G)^X$ can be determined once the reflection type of
$C_X^\r$ is known.

By definition $C_X^\r\subseteq C_X$, so equality holds if and only if
$|C_X^\r|= |C_X|$. Because $C_X^\r$ is a reflection group, $|C_X^\r|$ is the
product of the degrees of $C_X^\r$. The degrees of $C_X^\r$ are obtained
from the exponents by adding $1$, and the exponents of the irreducible
unitary reflection groups are given in \cite[Appendix
B.4]{orlikterao:arrangements}. For $|C_X|$, as described in
\cite[\S6.4]{orlikterao:arrangements}, in the tables in \cite[Appendix
C]{orlikterao:arrangements}, the size of the orbit $G\cdot X$ is the entry
in the column indexed by the reflection type of $Z_X$ in the first row. The
size of $C_X$ can then be computed using the orbit-stabilizer formula:
$|G\cdot X|= |G|/ |N_X| =|G|/ (|C_X| |Z_X|)$. Thus, whether or not $C_X =
C_X^\r$ can be determined once the reflection type of $C_X^\r$ is known.

For example, in the group $G_{34}$, if the reflection type of $Z_X$ is
$A_1A_2$, then $C_X^\r$ is isomorphic as a reflection group to the product
$C_3\times G(3,1,2)$, where $C_3$ is a cyclic group of order three acting on
a one-dimensional vector space.  Using the tables in
\cite{orlikterao:arrangements} we see that the multiset of coexponents of
$C_3 \times G(3,1,2)$ is $\{1,1,4\}$ and the multiset of exponents of
$\CA(G)^X$ is $\{1, 13, 16\}$, so $\CA(C_X) \ne \CA(G)^X$. Next, the
multiset of exponents of $C_3\times G(3,1,2)$ is $\{2, 2, 5\}$, so
$|C_X^\r|= 3\cdot 3\cdot 6=54$. On the other hand, the size of the orbit
indexed by $A_1A_2$ is $30240$, $\exp(G)= \{ 5, 11, 17, 23, 29, 41\}$, and
$\exp(Z_X) =\{1,1,2\}$, so $|C_X| = (6\cdot 12\cdot 18\cdot 24\cdot 30\cdot
42) / (30240\cdot 12)= 108$ and $C_X^\r \ne C_X$. Note also that
$\exp(C_X)\not \subseteq \exp(G)$.

It remains to compute the reflection type of $C_X^\r$ for each orbit. There
are three cases when the reflection type of $C_X^\r$ is easily determined.
First, if $X=\{0\}$, then $C_X=C_X^\r$ is the trivial group. Second, if
$\dim X = 1$, then since $C_X$ is a finite subgroup of the multiplicative
group of non-zero complex numbers, it is a cyclic group, and so clearly
$C_X=C_X^\r$. Third, if $X=V$, then $C_X=C_X^\r=G$.

The reflection types of the groups $C_X^\r$ for the exceptional Coxeter
groups were computed by Howlett~\cite{howlett:normalizers}.

The reflection types of the groups $C_X^\r$ for the exceptional,
non-Coxeter, unitary reflection groups when $1<\dim X<\dim V$ were
determined using {\sf GAP}4 (release 4.12) with the aid of the {\tt
  SmallGroups} library as follows.  First, a representative $X$ for each
orbit was chosen and then the elements $N_X$ were computed as matrices of
linear transformations of $X$. This information yielded generators of
$C_X^\r$. The reflection type of $C_X^\r$ was then determined using the {\tt
  SmallGroups} library in {\sf GAP}4, except for the case of the orbit of
type $A_1$ in the lattice of $G_{34}$, where the type of $C_X^\r$ was easy
to determine as it is an irreducible unitary reflection group of rank $5$.

The reflection types of the subgroups $C_X^\r$ for all pairs $(G,X)$, where
$G$ is an exceptional unitary reflection group with rank three or more and
$X$ is not equal $\{0\}$ or $V$ are given in
\autoref{tab:3}--\autoref{tab:e8} in the Appendix. We have included
Howlett's results for exceptional Coxeter groups, because the tables in
\cite{howlett:normalizers} contain several omissions. The columns labeled by
$\CA$ in the tables indicate whether or not $\CA(C_X) = \CA(G)^X$; the
columns labeled by $C_X$ indicate whether or not $C_X = C_X^\r$, and the
columns labeled by $\exp$ indicate whether or not $\exp(C_X) \subseteq
\exp(G)$.  For example, for the group $G_{34}$ and the orbit with pointwise
stabilizer of type $B_3$, $\CA(C_X) \ne \CA(G)^X$, $C_X^\r\ne C_X$, and
$\exp(C_X) \not \subseteq \exp(G)$.

The following proposition may be deduced from the tables.

\begin{proposition}\label{pro:excref}
  Suppose $G$ is an irreducible, exceptional unitary reflection group and
  $X\in L(\CA(G))$. If $\CA(C_X) = \CA(G)^X$ or if $\exp(C_X)\subseteq
  \exp(G)$, then $C_X^\r=C_X$.
\end{proposition}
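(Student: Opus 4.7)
The plan is to reduce to the irreducible case and then verify the claim row-by-row against the appendix tables, which is what the statement ``may be deduced from the tables'' anticipates. By the reductions in \autoref{ssec:red} (in particular \autoref{pro:redorb} together with the reducibility reduction), it suffices to prove the proposition one orbit representative at a time for each of the thirty-four exceptional irreducible groups $G_4,\dots,G_{37}$ acting faithfully on $V$. The three ``trivial'' lattice elements require no table: if $X=\{0\}$ then $C_X$ is trivial; if $\dim X=1$ then $C_X$ embeds in $\BBC^\times$, hence is cyclic and generated by a single reflection on the line $X$, so $C_X=C_X^\r$; and if $X=V$ then $C_X=G=G^\r=C_X^\r$. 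In each of these cases the conclusion holds vacuously with respect to either hypothesis.

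For the remaining orbit representatives ($0<\dim X<\dim V$), I would argue purely by inspection of \autoref{tab:3}--\autoref{tab:e8}. Each row of a table records, for one $G$-orbit in $L(\CA(G))$, the reflection type of $Z_X$ together with three Boolean flags: whether $\CA(C_X)=\CA(G)^X$, whether $C_X=C_X^\r$, and whether $\exp(C_X)\subseteq \exp(G)$. The content of the proposition is exactly that, scanning these tables, \emph{every} row whose first flag is ``yes'' has middle flag ``yes'', and likewise every row whose third flag is ``yes'' has middle flag ``yes''. This is a finite check that can be carried out directly from the tables; no further argument is needed once the tables are established.

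Consequently the real mathematical work is not in the proof of the proposition but in the \emph{construction} of the tables themselves, and this is the main obstacle. For the exceptional Coxeter groups the reflection types of the $C_X^\r$ are taken (and silently corrected where needed) from Howlett's paper. For the exceptional non-Coxeter groups, one picks a representative $X$ of each orbit, realises $N_X$ explicitly as matrices on $X$, reads off the reflections it contains to obtain generators of $C_X^\r$, and identifies the resulting group by matching against the \texttt{SmallGroups} library in \textsf{GAP}4 (together with an ad hoc argument for the single rank-$5$ case in $G_{34}$ of type $A_1$, where $C_X^\r$ is itself an irreducible exceptional group). With $C_X^\r$ identified, the $\CA$-flag is computed by comparing $|\CA(C_X)|$, which equals the sum of the coexponents of $C_X^\r$ by \cite[Corollary~6.63]{orlikterao:arrangements}, with $|\CA(G)^X|$, which equals the sum of the exponents of the free arrangement $\CA(G)^X$ recorded in \cite[Appendix~C]{orlikterao:arrangements}; the $C_X$-flag is computed by comparing $|C_X^\r|=\prod(\deg\text{'s of }C_X^\r)$ with $|C_X|=|G|/(|Z_X|\cdot|G\cdot X|)$ via the orbit--stabiliser formula; and the $\exp$-flag is read off directly.

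Once the tables are in place, the proposition is obtained by inspection: one verifies that in each table there is no row in which either the $\CA$-column or the $\exp$-column shows ``yes'' while the $C_X$-column shows ``no''. The worked example of $G_{34}$ with $Z_X$ of type $A_1A_2$ given in \autoref{ssec:exref} already illustrates the style of computation and shows how the three flags interact in practice.
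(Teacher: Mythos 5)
Your proposal is correct and follows essentially the same route as the paper: the paper's proof of \autoref{pro:excref} is precisely a row-by-row inspection of \autoref{tab:3}--\autoref{tab:e8}, with the trivial cases $X=\{0\}$, $\dim X=1$, $X=V$ handled directly and the tables themselves built exactly as you describe in \autoref{ssec:exref} (coexponent sums versus exponents of the free restricted arrangement for the $\CA$ flag, orbit--stabiliser and degree products for the $C_X$ flag, \textsf{GAP}4 and Howlett for the reflection types of $C_X^\r$). Nothing further is needed.
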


\begin{remark}\label{rem:table}
  It is possible to have $\CA(C_X) = \CA(G)^X$ and $\exp(C_X)\subseteq
  \exp(G)$, for example the orbit indexed by $G(3,3,3)$ in $G_{34}$. It is
  also possible to have $\CA(C_X)= \CA(G)^X$ and $\exp(C_X)\not \subseteq
  \exp(G)$, for example the orbit indexed by $D_4$ in $E_6$. Similarly, it
  is possible to have $\CA(C_X) \ne \CA(G)^X$, $C_X^\r=C_X$, and
  $\exp(C_X)\subseteq \exp(G)$, for example the orbit indexed by $G(3,3,4)$
  in $G_{34}$, and it is also possible to have $\CA(C_X) \ne \CA(G)^X$,
  $C_X^\r=C_X$, and $\exp(C_X)\not \subseteq \exp(G)$, for example the orbit
  indexed by $A_1^3$ in $G_{34}$.
\end{remark}

\subsection*{Proof of \autoref{thm:main} for the exceptional unitary
  reflection groups}\label{ssec:proofmainex}

Suppose $G$ is an exceptional unitary reflection group and $X\in L(\CA(G))$
is such that $\CA(C_X) = \CA(G)^X$ and $\exp(C_X) \subseteq \exp(G)$. It
remains to show that the restriction map $\rho_X\colon \BBC[V]^G\to
\BBC[X]^{C_X^p}$ is surjective.

First, if $G$ is a Coxeter group, $\dim X=1$, and both conditions hold, then
$C_X$ is of type $A_1$ and acts as $-1$ on $X$. The canonical bilinear form
defined on $V$ in \cite[Ch.~VI \S1.1]{bourbaki:groupes} gives a non-zero,
homogeneous polynomial of degree two in $\BBC[V]^G$, say $f_2$. It is
straightforward to check that $\rho_X(f_2)\ne0$ (see \cite[\S4]
{douglassroehrle:invariants}) and it follows that $\rho_X$ is surjective in
these cases.

Finally, suppose that both conditions hold and that either $G$ is not a
Coxeter group, or that $G$ is a Coxeter group and $1<\dim X<\dim V$. All
pairs $(G,X)$ such that $\CA(C_X) = \CA(G)^X$ and
$\exp(C_X) \subseteq \exp(G)$ are highlighted in
\autoref{tab:3}--\autoref{tab:e8}. The fact that $\rho_X$ is surjective was
checked directly in every case by implementing the following argument using
a pre-packaged version of {\sf GAP}3 (release 4.4) \cite{gap3}, with the
packages {\sf CHEVIE} (version 4) \cite{chevie}, for functionality on
unitary reflection groups and {\sf VKCURVE} (version 1.2) for functionality
on multivariate polynomials, provided by J.~Michel
\cite{michel:development}.

\begin{enumerate}
\item Choose a basis $\{ x_1, \dots x_{n}\}$ of $V^*$ such that the
  restrictions of $x_1, \dots, x_a$ to $X$ are a basis of $X^*$ and the
  restrictions of $x_{a+1}, \dots, x_{n}$ are a basis of $(X^\perp)^*$
  (recall that $V$ is endowed with a $G$-invariant hermitian form). Then the
  restriction mapping $\BBC[V]\to \BBC[X]$ is given by evaluating $x_j$ at
  zero for $a+1\leq j\leq n$.
\item Let $\{f_1, \dots, f_n\}$ be the basic invariants of $G$ obtained from
  the {\sf CHEVIE} package. If the rank of $G$ is two, restrict $f_1$ and
  $f_2$ to $X$ to obtain invariants $\rho_X(f_1)$ and $\rho_X(f_2)$ of
  $C_X$. If the rank of $G$ is greater than $2$, then in all cases the
  multisets of exponents of $C_X$ and $G$ are actually sets. Choose the
  numbering so that $\deg f_1, \dots, \deg f_a$ are the degrees of
  $C_X$. Restrict $f_1$, \dots, $f_a$ to $X$ to obtain invariants
  $\rho_X(f_1)$, $\rho_X(f_2)$, \dots, $\rho_X(f_a)$ of $C_X$.
\item If the rank of $G$ is greater than $2$, compute the determinant of the
  Jacobian of $\rho_X(f_1)$, $\rho_X(f_2)$, \dots, $\rho_X(f_a)$ (with
  respect to the basis $x_1, \dots, x_a$ of $X^*$).
\end{enumerate}
If the rank of $G$ is $2$, then it turns out that at least one of
$\rho_X(f_1)$ or $\rho_X(f_2)$ is non-zero, so $\rho_X$ is surjective in
these cases. If the rank of $G$ is greater than $2$, then the Jacobian
determinant is non-zero in all cases, and so it follows from
\cite[Proposition~2.3]{springer:regular} that $\BBC[X]^{C_X}= \BBC[
\rho_X(f_1), \rho_X(f_2), \dots, \rho_X(f_a)]$.  Therefore, $\rho_X$ is
surjective in these cases as well. This completes the proof of
\autoref{thm:main}.

\section*{Appendix}\label{sec:app}

Results of the computations for exceptional groups with rank greater than
$2$ used in the proof of \autoref{thm:main} are presented in the tables
below.
\begin{itemize}
\item Orbits in $\CA(G)$ are labeled by the reflection type of the pointwise
  stabilizer of a subspace in the orbit. The order of the rows is the same
  as in \cite[Appendix C]{orlikterao:arrangements}.
\item $C_k$ denotes a cyclic group of order $k$ and we have written
  $G_{r,p,n}$ instead of $G(r,p,n)$ to save (a little) space.
\item In the columns labeled $\CA$, $Y$ indicates that $\CA(C_X) =\CA(G)^X$.
\item In the columns labeled $C_X$, $Y$ indicates that $C_X^\r=C_X$.
\item In the columns labeled $\exp$, $Y$ indicates that $\exp(C_X) \subseteq
  \exp(G)$.
\item If $X\in L(\CA)$ is such that $\CA^X= \CA(C_X^\r)$ and
  $\exp(C_X)\subseteq \exp(G)$, then the row indexed by the reflection type
  of $Z_X$ is highlighted. As explained in \autoref{ssec:proofmainex}, the
  map $\rho_X$ was checked to be surjective in all these cases.
\end{itemize}


{\tiny 
\captionsetup{font=scriptsize, skip=3pt}
\renewcommand{\arraystretch}{1.1}
  \centering
  \begin{minipage}[t]{.495\linewidth}
    \centering
    \captionof{table}{Exceptional groups of rank $3$} \label{tab:3}
    \begin{tabular} {>{$}c<{$} |%
        >{$}c<{$} >{$}c<{$} >{$}c<{$} >{$}c<{$} >{$}c<{$} }
      \addlinespace \toprule
      \addlinespace%
      G & Z_X & C_X^\r & \CA & C_X & \exp\\ \addlinespace \midrule
      \rowcolor{shade}
      G_{23} & A_0 & H_3 & Y & Y & Y\\
      (H_3) & A_1 & A_1^2 & N & Y & N\\
      \rowcolor{shade}
      & A_1^2 & A_1 & Y & Y & Y\\
      \rowcolor{shade}
      & A_2 & A_1 & Y & Y & Y\\
      \rowcolor{shade}
      & I_2(5) & A_1 & Y & Y & Y\\
      \rowcolor{shade}
      & H_3 & A_0 & Y & Y & Y\\
      \midrule
      \rowcolor{shade}
      G_{24} & A_0 & G_{24} & Y & Y & Y\\
      & A_1 & B_2 & N & Y & N\\
      & A_2 & A_1 & Y & Y & N\\
      & B_2 & A_1 & Y & Y & N\\
      \rowcolor{shade}
      & G_{24} & A_0 & Y & Y & Y\\
      \midrule
      \rowcolor{shade}
      G_{25} & A_0 & G_{25} & Y & Y & Y\\
      & C_3 & G_{3,1,2} & Y & Y & N\\
      \rowcolor{shade}
      & C_3^2 & C_6 & Y & Y & Y\\
      & G_4 & C_3 &  Y & Y & N\\
      \rowcolor{shade}
      & G_{25} & A_0  & Y & Y & Y\\
      \midrule 
      \rowcolor{shade}
      G_{26} & A_0 & G_{26} & Y & Y & Y\\ 
      \rowcolor{shade}
      & A_1 & G_5 & Y & Y & Y\\
      & C_3 & G_{6,2,2} & Y & Y & N\\
      \rowcolor{shade}
      & A_1 C_3 & C_6 & Y & Y & Y\\
      \rowcolor{shade}
      & G_4 & C_6 & Y & Y & Y\\
      \rowcolor{shade}
      & G_{3,1,2} & C_6 & Y & Y & Y\\ 
      \rowcolor{shade}
      & G_{26} & A_0 & Y & Y & Y\\
      \midrule
      \rowcolor{shade}
      G_{27} & A_0 & G_{27} & Y & Y & Y\\
      & A_1 & B_2 & N & N & N\\
      \rowcolor{shade}
      & A_2' & C_6 & Y & Y & Y\\
      \rowcolor{shade}
      & A_2'' & C_6 & Y & Y & Y\\
      \rowcolor{shade}
      & B_2 & C_6 & Y & Y & Y\\
      \rowcolor{shade}
      & I_2(5) & C_6 & Y & Y & Y\\
      \rowcolor{shade}
      & G_{27} & A_0 & Y & Y & Y\\
      \bottomrule
    \end{tabular}
  \end{minipage}
  \begin{minipage}[t]{.495\linewidth}
    \centering
    \captionof{table}{Exceptional groups of rank $4$} \label{tab:4}
    \begin{tabular} {%
        >{$}c<{$} | >{$}c<{$} >{$}c<{$} >{$}c<{$} >{$}c<{$} >{$}c<{$}}
      \addlinespace  \toprule \addlinespace%
      G & Z_X & C_X^\r & \CA & C_X & \exp\\ \addlinespace \midrule
      \rowcolor{shade}
      G_{28} & A_0 & F_4 & Y & Y & Y\\
      (F_4) & A_1 & B_3 & N & Y & N\\
      & \tilde A_1 & B_3 & N & Y & N\\
      & A_1 \tilde A_1 & A_1^2 & N & Y & N\\ 
      \rowcolor{shade}
      & A_2 & G_2 & Y & Y & Y\\
      \rowcolor{shade}
      & \tilde A_2 & G_2 & Y & Y & Y\\
      & B_2 & B_2 & Y & Y & N\\
      \rowcolor{shade}
      & C_3\footnote{A Coxeter group of type $C_3$, not a cyclic group.} 
      & A_1 & Y & Y & Y\\
      \rowcolor{shade}
      & B_3 & A_1 & Y & Y & Y\\
      \rowcolor{shade}
      & A_1 \tilde A_2 & A_1 & Y & Y & Y\\
      \rowcolor{shade}
      & \tilde A_1A_2 & A_1 & Y & Y & Y\\
      \rowcolor{shade}
      & F_4 & A_0 & Y & Y & Y\\
      \midrule
      \rowcolor{shade}
      G_{29} & A_0 & G_{29} & Y & Y & Y\\
      & A_1 & B_3 & N & N & N\\
      & A_1^2 & G_{4,2,2} & N & Y & N\\
      & A_2 & A_1^2 & N & N & N\\
      \rowcolor{shade}
      & B_2 & G_{4,1,2} & Y & Y & Y\\
      \rowcolor{shade}
      & A_1A_2 & C_4 & Y & Y & Y\\
      \rowcolor{shade}
      & A_3' & C_4 & Y & Y & Y\\
      \rowcolor{shade}
      & A_3'' & C_4 & Y & Y & Y\\
      \rowcolor{shade}
      & B_3 & C_4 & Y & Y & Y\\
      \rowcolor{shade}
      & G_{4,4,3} & C_4 & Y & Y & Y\\
      \rowcolor{shade}
      & G_{29} & A_0 & Y & Y & Y\\
      \midrule
      \rowcolor{shade}
      G_{30} & A_0 & H_4 & Y & Y & Y\\
      (H_4) & A_1 & G_{23} & N & Y & N\\
      & A_1^2 & B_2 & N & Y & N\\
      & A_2 & G_2 & N & Y & N\\
      & I_2(5) & C_{20} & N & Y & N\\
      \rowcolor{shade}
      & A_1A_2 & A_1 & Y & Y & Y\\
      \rowcolor{shade}
      & A_1I_2(5) & A_1 & Y & Y & Y\\
      \rowcolor{shade}
      & A_3 & A_1 & Y & Y & Y\\
      \rowcolor{shade}
      & H_3 & A_1 & Y & Y & Y\\
      \rowcolor{shade}
      & H_4 & A_0 & Y & Y & Y\\
      \midrule
      \rowcolor{shade}
      G_{31} & A_0 & G_{31} & Y & Y & Y\\
      & A_1 & G_{4,1,3} & N & Y & N\\
      & A_1^2 & G_{4,1,2} & N & Y & N\\
      & A_2 & G_2 & N & N & N\\
      \rowcolor{shade}
      & G_{4,2,2} & G_8 & Y & Y & Y\\
      & A_1A_2 & C_4 & Y & Y & N\\
      & A_3 & C_4 & Y & Y & N\\
      & G_{4,2,3} & C_4 & Y & Y & N\\
      \rowcolor{shade}
      & G_{31} & A_0 & Y & Y & Y\\
      \midrule
      \rowcolor{shade}
      G_{32} & A_0 & G_{32} & Y & Y & Y\\
      & C_3 & G_{26} & Y & Y & N\\
      & G_4 & G_5 & Y & Y & N\\
      & C_3^2 & G_{6,1,2} & Y & Y & N\\
      & C_3G_4 & C_6 & Y & Y & N\\
      & G_{25} &C_6 & Y & Y & N\\
      \rowcolor{shade}
      & G_{32} & A_0 & Y & Y & Y\\
      \bottomrule
    \end{tabular}
  \end{minipage}
} 

\newpage

{ \scriptsize
  \captionsetup{font=scriptsize, skip=5pt}
  \renewcommand{\arraystretch}{1.1}
  \centering
  \begin{minipage}[t]{.49\linewidth}
    \centering
    \captionof{table}{The exceptional group $G_{33}$} \label{tab:g33}
    \begin{tabular} {>{$}c<{$} |%
        >{$}c<{$} >{$}c<{$} >{$}c<{$} >{$}c<{$}}
      \toprule   
      Z_X & C_X^\r & \CA & C_X & \exp\\
      \midrule
      \rowcolor{shade}
      A_0 & G_{33} & Y & Y & Y\\
      A_1 & D_4 & N & N & N\\
      A_1^2 & B_3 & N & Y & N\\
      A_2 & G_{3,1,2} & N & N & N\\
      \rowcolor{shade}
      A_1^3 & G_{6,3,2}& Y & Y & Y\\
      A_1A_2 & C_3 & N & N & N\\
      A_3 & A_1^2 & N & Y & N\\
      \rowcolor{shade}
      G_{3,3,3} & G_4 & Y & Y & Y\\
      A_1A_3 & A_1 & Y & Y & N\\
      A_4 & A_1 & Y & Y & N\\
      \rowcolor{shade}
      D_4 & C_6 & Y & Y & Y\\
      G_{3,3,4} & A_1 & Y & Y & N\\
      \rowcolor{shade}
      G_{33} & A_0 & Y & Y & Y\\
      \bottomrule
    \end{tabular}
    \vglue 5ex
    \captionof{table}{The exceptional group $G_{34}$} \label{tab:g34}
    \begin{tabular} {%
        >{$}c<{$} | >{$}c<{$} >{$}c<{$} >{$}c<{$} >{$}c<{$}}
      \toprule
      Z_X & C_X^\r & \CA & C_X & \exp\\
      \midrule
      \rowcolor{shade}
      A_0 & G_{34}  & Y & Y & Y\\
      A_1 & G_{33} & N & N & N\\
      A_1^2 & F_4 &  N & N & N\\
      A_2 & G_{3,1,4} & N & N & N\\
      A_1^3 & G_{6,3,3} & N & Y & N\\
      A_1A_2 & C_3 G_{3,1,2} & N & N & N \\
      A_3 & B_3 & N & N & N\\
      \rowcolor{shade}
      G_{3,3,3} & G_{26} & Y & Y & Y\\
      A_1^2A_2 & G_{6,2,2} & N & Y & N\\
      A_2^2 & G_{6,2,2} & N & Y & N\\
      A_1A_3 & A_1^2 & N & N & N\\
      A_4 & A_1^2 & N & N & N\\
      \rowcolor{shade}
      D_4 & G_{6,1,2} & Y & Y & Y\\
      \rowcolor{shade}
      A_1G_{3,3,3} & G_5 & Y & Y & Y\\
      G_{3,3,4} & G_{6,2,2}& N & Y & Y \\
      \rowcolor{shade}
      A_2A_3 & C_6 & Y & Y & Y\\
      \rowcolor{shade}
      A_1A_4 & C_6 & Y & Y & Y\\
      \rowcolor{shade}
      A_5' & C_6 & Y & Y & Y\\
      \rowcolor{shade}
      A_5'' & C_6 & Y & Y & Y\\
      \rowcolor{shade}
      D_5 & C_6 & Y & Y & Y\\
      \rowcolor{shade}
      A_1G_{3,3,4} & C_6 & Y & Y & Y\\
      \rowcolor{shade}
      G_{3,3,5} & C_6 & Y & Y & Y\\
      \rowcolor{shade}
      G_{33} & C_6 & Y & Y & Y\\
      \rowcolor{shade}
      G_{34} & A_0 & Y & Y & Y\\
      \bottomrule
    \end{tabular}
  \end{minipage}
  \begin{minipage}[t]{.49\linewidth}
    \centering
    \captionof{table}{The exceptional group $G_{35}$ ($E_6$)} \label{tab:e6}
    \begin{tabular} {%
        >{$}c<{$} | >{$}c<{$} >{$}c<{$} >{$}c<{$} >{$}c<{$}}
      \toprule 
      Z_X & C_X^\r & \CA & C_X & \exp\\ 
      \midrule
      \rowcolor{shade}
      A_0 & E_6 & Y & Y & Y\\
      A_1 & A_5 & N & Y & N\\
      A_1^2 & B_3 & N & Y & N\\
      A_2 & A_2^2 & N & N & N\\
      A_1^3 & A_1A_2 & N & Y & N\\
      A_1A_2 & A_2 & N & Y & N\\
      A_3 & B_2 & N & Y & N\\
      A_1^2A_2 & A_1 & N & Y & Y\\
      \rowcolor{shade}
      A_2^2 & G_2 & Y & Y & Y\\
      A_1A_3 & A_1 & N & Y & Y\\
      A_4 & A_1 & N & Y & Y\\
      D_4 & A_2 & Y & Y & N\\
      \rowcolor{shade}
      A_1A_2^2 & A_1 & Y & Y & Y\\
      A_1A_4 & A_0 &  N & Y & N\\
      \rowcolor{shade}
      A_5 & A_1 & Y & Y & Y\\
      D_5 & A_0 & N & Y & N\\
      \rowcolor{shade}
      E_6 & A_0 & Y & Y & Y\\
      \bottomrule
    \end{tabular}
  \end{minipage}
}

\newpage

{\scriptsize 
  \captionsetup{font=scriptsize, skip=5pt}
\begin{minipage}[t]{.49\linewidth}
    \centering
    \captionof{table}{The exceptional group $G_{36}$ ($E_7$)} \label{tab:e7}
    \begin{tabular} {%
        >{$}c<{$} >{$}c<{$} >{$}c<{$} >{$}c<{$} >{$}c<{$}}
      \toprule 
      Z_X & C_X^\r & \CA & C_X & \exp \\
      \midrule
      \rowcolor{shade}
      A_0 & E_7 & Y & Y & Y\\
      A_1 & D_6 & N & Y & N\\
      A_1^2 & A_1B_4 & N & Y & N\\
      A_2 & A_5 & N & N & N\\
      \rowcolor{shade}
      (A_1^3)' & F_4 & Y & Y & Y\\
      (A_1^3)'' & A_1B_3 & N & Y & N\\
      A_1A_2 & A_3 & N & Y & N\\
      A_3 & A_1B_3 & N & Y & N\\
      A_1^4 & B_3 & N & Y & N\\
      A_1^2A_2 & A_1^3 & N & Y & N\\
      A_2^2 & A_1G_2 & N & Y & N\\
      (A_1A_3)' & B_3 & N & Y & N\\
      (A_1A_3)'' & A_1^3 & N & Y & N\\
      A_4 & A_2 & N & N & N\\
      D_4 & B_3 & Y & Y & N\\
      \rowcolor{shade}
      A_1^3A_2 & G_2 & Y & Y & Y\\
      A_1A_2^2 & A_1^2 & N & Y & N\\
      A_1^2A_3 & A_1^2 & N & Y & N\\
      A_2A_3 & A_1^2 & N & Y & N\\
      A_1A_4 & A_0 & N & N & N\\
      \rowcolor{shade}
      A_5' & G_2 & Y & Y & Y\\
      A_5'' & A_1^2 & N & Y & N\\
      A_1D_4 & B_2 & Y & Y & N\\
      D_5 & A_1^2 & N & Y & N\\
      \rowcolor{shade}
      A_1A_2A_3 & A_1 & Y & Y & Y\\
      \rowcolor{shade}
      A_2A_4 & A_1 & Y & Y & Y\\
      \rowcolor{shade}
      A_1A_5 & A_1 & Y & Y & Y\\
      \rowcolor{shade}
      A_6 & A_1 & Y & Y & Y\\
      \rowcolor{shade}
      A_1D_5 & A_1 & Y & Y & Y\\
      \rowcolor{shade}
      D_6 & A_1 & Y & Y & Y\\
      \rowcolor{shade}
      E_6 & A_1 & Y & Y & Y\\
      \rowcolor{shade}
      E_7 & A_0 & Y & Y & Y\\
      \bottomrule
    \end{tabular}    
  \end{minipage}
  \begin{minipage}[t]{.49\linewidth}
    \centering
    \captionof{table}{The exceptional group $G_{37}$ ($E_8$)} \label{tab:e8}
    \begin{tabular} {%
        >{$}c<{$} >{$}c<{$} >{$}c<{$} >{$}c<{$} >{$}c<{$}}
      \toprule 
      Z_X & C_X^\r & \CA & C_X & \exp \\
      \midrule
      \rowcolor{shade}
      A_0 & E_8 & Y & Y & Y\\
      A_1 & E_7 & N & Y & N\\
      A_1^2 & B_6 & N & Y & N\\
      A_2 & E_6 & N & N & N\\
      A_1^3 & A_1F_4 & N & Y & N\\
      A_1A_2 & A_5 & N & N & N\\
      A_3 & B_5 & N & N & N\\
      A_1^4 & B_4 & N & Y & N\\
      A_1^2A_2 & A_1B_3 & N & Y & N\\
      A_2^2 & G_2^2 & N & N & N\\
      A_1A_3 & A_1B_3 & N & Y & N\\
      A_4 & A_4 & N & N & N\\
      D_4 & F_4 & Y & Y & N\\
      A_1^3A_2 & A_1G_2 & N & Y & N\\
      A_1A_2^2 & A_1G_2 & N & Y & N\\
      A_1^2A_3  & A_1B_2 & N & Y & N\\
      A_2A_3 & A_1B_2 & N & Y & N\\
      A_1A_4 & A_2 & N & N & N\\
      A_5 & A_1G_2 & N & Y & N\\
      A_1D_4 & B_3 & N & Y & N\\
      D_5 & B_3 & N & Y & N\\
      A_1^2A_2^2 & B_2 & N & Y & N\\
      A_1A_2A_3 & A_1^2 & N & Y & N\\
      A_1^2A_4 & A_1^2 & N & Y & N\\
      A_3^2 & B_2 & N & Y & N\\
      A_2A_4 & A_1^2 & Y & Y & N\\
      A_1A_5 & A_1^2 & N & Y & N\\
      A_6 & A_1^2 & N & Y & N\\
      A_2D_4 & G_2 & Y & Y & N\\
      A_1D_5 & A_1^2 & N & Y & N\\
      D_6 & B_2 & N & Y & N\\
      E_6 & G_2 & Y & Y & N\\
      \rowcolor{shade}
      A_1A_2A_4  & A_1 & Y & Y & Y\\
      \rowcolor{shade}
      A_3A_4  & A_1 & Y & Y & Y\\
      \rowcolor{shade}
      A_1A_6  & A_1 & Y & Y & Y\\
      \rowcolor{shade}
      A_7 &  A_1 & Y & Y & Y\\
      \rowcolor{shade}
      A_2D_5  & A_1 & Y & Y & Y\\
      \rowcolor{shade}
      D_7 &  A_1 & Y & Y & Y\\
      \rowcolor{shade}
      A_1E_6  & A_1 & Y & Y & Y\\
      \rowcolor{shade}
      E_7 &  A_1 & Y & Y & Y\\
      \rowcolor{shade}
      E_8 &  A_0 & Y & Y & Y\\
      \bottomrule
    \end{tabular}
  \end{minipage}
}



\bigskip \noindent {\bf Acknowledgments:} This work was partially supported
by a grant from the Simons Foundation (Grant \#245399 to
J.M.~Douglass). J.M.~Douglass would like to acknowledge that some of this
material is based upon work supported by (while serving at) the National
Science Foundation. We acknowledge support from the DFG-priority program
SPP1489 ``Algorithmic and Experimental Methods in Algebra, Geometry, and
Number Theory.'' We are grateful to S.~Casalaina-Martin and J.~Michel for
helpful discussions.


\bibliographystyle{plain}


\end{document}